\documentclass[a4paper,leqno, 12pt]{article}

\usepackage{tikz}
\usetikzlibrary{matrix,arrows, patterns, positioning, calc, intersections,shapes,decorations.pathmorphing,decorations.markings}

\usepackage{tikz-cd}

\usepackage{tikz-3dplot}

\usepackage{amsmath}
\usepackage[utf8]{inputenc}
\usepackage{microtype}
\usepackage{amscd}
\usepackage{amsmath,amssymb,amsfonts,accents,stmaryrd}
\usepackage{xstring}
\usepackage{xcolor}
\usepackage[mathscr,mathcal]{euscript}
\usepackage{xypic}
\xyoption{rotate}
\usepackage[all,v2,cmtip,2cell]{xy}
\UseAllTwocells
\entrymodifiers={+!!<0pt,\fontdimen22\textfont2>}
\usepackage{url}
\usepackage{latexsym}
\usepackage{amsthm}
\usepackage{multicol}
\usepackage{enumitem}
\usepackage[colorlinks=true, linkcolor={blue!50!black}, pdfhighlight=/O, ocgcolorlinks=true]{hyperref}
\usepackage{graphicx}
\usepackage{color}
\usepackage{mathtools}
\usepackage[colorinlistoftodos]{todonotes}
\usepackage[affil-it]{authblk}

\usepackage{adjustbox}

\usepackage{cleveref}

\newcommand{\highlight}[1]{%
  \tikz[baseline=(X.base)] \node[fill=yellow, anchor=base] (X) {$#1$};%
}

\allowdisplaybreaks



\usepackage[a4paper, total={5.5in,8.5in}]{geometry}


\numberwithin{equation}{section}
\theoremstyle{plain}
\newtheorem*{theorem*}{Theorem}
\newtheorem{theorem}[equation]{Theorem}
\newtheorem{lemma}[equation]{Lemma}
\newtheorem{proposition}[equation]{Proposition}

\theoremstyle{definition}

\newtheorem{definition}[equation]{Definition}

\newtheorem{remark}[equation]{Remark}


\setlist[enumerate]{label=(\arabic*), leftmargin=*}
\setenumerate{label=(\arabic*), leftmargin=*}
\setlist[itemize]{label=$\vcenter{\hbox{\footnotesize$\bullet$}}$, leftmargin=*}
\setitemize{label=$\vcenter{\hbox{\footnotesize$\bullet$}}$, leftmargin=*}



\newcommand{\cat}[1]{
\StrLen{#1}[\mystrlen]
\ifnum\mystrlen=1 \mathscr{#1}
\else \mathrm{#1}
\fi}

\newcommand\numberthis{\addtocounter{equation}{1}\tag{\theequation}}



\makeatletter
\newcommand{\mytag}[2]{%
  \text{#1}%
  \@bsphack
  \begingroup
    \@onelevel@sanitize\@currentlabelname
    \edef\@currentlabelname{%
      \expandafter\strip@period\@currentlabelname\relax.\relax\@@@%
    }%
    \protected@write\@auxout{}{%
      \string\newlabel{#2}{%
        {#1}%
        {\thepage}%
        {\@currentlabelname}%
        {\@currentHref}{}%
      }%
    }%
  \endgroup
  \@esphack
}
\makeatother

\title{Koszul complexes and derived intersections}

\author[]{
	Tristan Bozec\thanks{Univ Angers, CNRS, LAREMA - UMR 6093, SFR MATHSTIC, F-49000 Angers, France \\
											\href{mailto:tristan.bozec@univ-angers.fr}{\textup{tristan.bozec@univ-angers.fr}}},  
	Julien Grivaux\thanks{UMR7586 -- Institut de Math\'ematiques de Jussieu-Paris Rive Gauche \\
Sorbonne Universit\'e -- 4, Place Jussieu -- case 247 -- 75252 Paris Cedex 05 \\
\href{mailto:julien.grivaux@sorbonne-universite.fr}{\textup{julien.grivaux@sorbonne-universite.fr}}}
}
\date{}

\begin{document}

\maketitle

\begin{abstract}
The aim of this article is to provide a complementary understanding to some results of the second author using the machinery of Koszul complexes, and to explain how this approach can provide a new description of projective derived intersections.
\end{abstract}

\tableofcontents

\section{Introduction}

The main objective of the article \cite{grivaux-documenta} (whose results were obtained independently in \cite{arinkin-caldararu-habliczek}) was the following one : given an ambient complex manifold $Z$, along with two closed complex submanifolds $X,Y$ intersecting cleanly but not necessarily transversally along a smooth submanifold $T$, compute the derived tensor product \[
\mathcal O_X\overset{\mathbb L}{\otimes}_{\mathcal O_Z}\mathcal O_Y\] in the bounded derived category $\mathrm{D}(Z)$ of coherent sheaves on $Z$, and determine when this object is formal. One answer is that the formality of the derived intersection $j^*_{Y/Z}j_{X/Z*} \mathcal{O}_X$ in $\mathrm{D}(Y)$ is equivalent to the splitting of the excess conormal sequence
\[
0 \rightarrow \mathcal{N} \rightarrow (N^{\vee}_{X/Z})_{|T} \rightarrow N^{\vee}_{T/Y} \rightarrow 0.
\]
Besides, one of the most surprising things about this derived intersection $j^*_{Y/Z}j_{X/Z*} \mathcal{O}_X$ is that although it is locally isomorphic to $j_{T/Y*} \mathrm{Sym}(\mathcal{N}[1])$, it is not always the image of  an element in $\mathrm{D}(T)$ : a cohomological obstruction is given in \cite[Prop. 8.1]{grivaux-documenta}. Although it was not noticed at the time, the case where $T$ is of codimension one in $Y$ is of particular importance : then, the cohomological obstruction of \textit{loc. cit.} is equivalent to the splitting of the excess conormal exact sequence, which means that the formality of the derived intersection in $\mathrm{D}(Y)$ is in fact \textit{equivalent} to the fact that the derived intersection comes from $T$.
\par \medskip
When one cycle is given as the zero locus of a cosection of a holomorphic vector bundle vanishing transversally, we can solve the structural sheaf of the cycle by the corresponding Koszul complex of the cosection and restrict it to the other cycle. It gives again a Koszul complex, which is a derived critical locus in the sense of \cite{Gabriele_2020}. In this paper, we study in detail these kinds of derived critical loci. They are attached to cosections that we called \emph{weakly regular}. The corresponding derived critical loci are locally of the form $j_*\mathrm{Sym}(\mathcal{N}[1])$ for some excess bundle $\mathcal{N}$ living on the (underived) vanishing locus measuring the defect of transversality of the vanishing of the cosection.
\par \medskip
The motivation for this study is twofold. The first one is that Koszul complexes attached to weakly regular cosections are a toy model for derived intersections. This model shares a lot of common properties with derived intersections. For instance, theorem \ref{div} is the Koszul analog of \cite[thm 1.1 and prop. 8.1]{grivaux-documenta}. The divisor case is particularly enlightning, because it is extremely simple and provides concrete examples of elements of the derived categories which are locally pushforwards of elements of a smooth divisor, but not globally. The minimal example of such a complex is obtained by performing a blowup : if $U$ is a disk of $\mathbb{C}^2$ containing the origin and we blow up the point as below
\[
\xymatrix{ \mathbb{P}^1 \ar[r]^{j} \ar[d] & \mathrm{Bl}_{(0,0)} U \ar[d]^-{p} \\
\{(0,0)\} \ar[r]^{i} & U
}
\]
then $p^* i_* \mathcal{O}_{(0,0)}$ is locally isomorphic to $j_*(\mathcal{O}_{\mathbb{P}^1} \oplus \Omega^1_{\mathbb{P}^1}(1) [1])$, but it doesn't belong to $j_* \mathrm{D}(\mathbb{P}^1)$. The object is explicitly realized by pulling back the Koszul conplex on the trivial rank $2$ bundle on $\mathbb{C}^2$ given by the cosection $(f, g) \mapsto zf+wg$. These kind of push-pull calculations on blowups enter in the framework of excess intersection formulas as developed initially by Grothendieck in $K$-theory ({see} for instance \cite[lemma 19 c)]{Borel-Serre}), and then in a lot of cohomology theories like Chow groups. However, we see that no such formula can be expected in derived categories.
\par \medskip
The second motivation is that for derived intersections inside a proective space, the toy model is as strong as the model : all projective derived intersections are in fact given by Koszul complexes of weakly regular cosections. This follows by combining the diagonal trick (intersecting $X$ and $Y$ in $Z$ is the same as intersecting $X \times Y$ with the diagonal of $Z$) together with the fact that the diagonal of $\mathbb{P}^N$ is the zero locus of a regular cosection on $\Omega^1_{\mathbb{P}^N}(1) \boxtimes \mathcal{O}_{\mathbb{P}^N}(-1)$. The diagonal trick has already been investigated in the case of arbitrary self-intersections ({see} \cite[\S 9]{grivaux-documenta}) but it didn't provide much new information. However, combining it with the very rich structure of the diagonal of $\mathbb{P}^N$, it allows us to produce an explicit complex calculating projective derived self-intersections.
\par \medskip
The organization of the paper is as follows : in \Cref{sec2} we develop the main setting (Koszul complex for weakly regular cosections). The main result is \Cref{div}, which states that the Koszul complex of a weakly regular section is formal if and only if the excess exact sequence splits, and provides a cohomological obstruction to realize the Koszul complex as the direct image of an object leaving in the vanishing locus of the cosection. In \Cref{3}, we develop three constructions where Koszul constructions for weakly regular sections are useful. \Cref{3.1} deals with the divisor case. This case is very explicit. In \Cref{3.2}, we deal with blowups. This section can be seen as a natural continuation of \cite[\S 15]{Borel-Serre}. Lastly, in \Cref{3.3}, we prove that a clean derived intersection in a projective space can be expressed as a Koszul complex of a weakly regular cosection. The final \Cref{4} is dedicated to the self-intersection of a smooth subvariety of a projective space. The results \Cref{can1} are optional, but they shed light on some aspects of projective geometry related to derived intersections. In \Cref{proj}, we prove that given a smooth subvariety $X$ of $\mathbb{P}^N$, one of the two Beilinson spectral sequences attached to the structural sheaf of $X$ allows to produce a locally free resolution of $\mathcal{O}_X$ on $\mathbb{P}^N$. In the last section, we provide in \Cref{AS} an explicit complex computing the structural sheaf of the derived intersection of $X$ in $\mathbb{P}^N$. We present two proofs of this result, the first uses the aforementioned resolution of $\mathcal{O}_X$, and the second one used the realisation of this derived intersection as a Koszul complex.

\section{Koszul complexes and weakly regular cosections} \label{sec2}

\subsection{Derived critical loci}
The basic setting we fix is as follows :
\begin{enumerate}
\item[--] $Y$ a connected complex manifold;
\item[--] $\mathcal{E}$ a holomorphic vector bundle on $Y$ of rank $r$;
\item[--] $s \colon \mathcal{E} \rightarrow \mathcal{O}_Y$ a cosection of $\mathcal{E}$;
\item[--] $T$ the vanishing locus of $s$, endowed with its schematic structure;
\item[--] $j$ the closed embedding of $T$ in $Y$.
\end{enumerate}

\begin{definition}
The Koszul complex $\mathcal{K}(\mathcal{E}, s)$ attached to the pair $(\mathcal{E}, s)$ is the complex $\mathrm{Sym}\,(\mathcal{E}[1])$ endowed with the differentials $\wedge^p \mathcal{E} \rightarrow \wedge^{p-1} \mathcal{E}$ given by
\[
s(e_1 \wedge \ldots \wedge e_p) = \sum_{i=1}^p (-1)^{i-1} s(e_i) e_1 \wedge \ldots \wedge e_{i-1} \wedge e_{i+1} \wedge \ldots \wedge e_{p}.
\]
\end{definition}

The Koszul complex is naturally a sheaf of commutative differential graded algebras, and $\mathcal{H}^0(\mathcal{K}(\mathcal{E}, s) \simeq j_* \mathcal{O}_T$. This sheaf is the structural sheaf of the derived critical locus of the section $s^{\vee}$ of $\mathcal{E}^{\vee}$ ({see} \cite[Prop. 2.4]{Gabriele_2020}). It is well-known that the morphism $\mathcal{K}(\mathcal{E}, s) \to j_* \mathcal{O}_T$ is a quasi-isomorphism, meaning that the derived critical locus has no extra derived structure, if and only if the cosection is regular. One implication is clear, and the equivalence is given for instance by \cite[Lemma 15.30.7]{stacks-project}.
\par \medskip
The regularity of $s$ does not imply the smoothness of $T$. However, if $T$ is smooth (or lci), $N_{T/Y}^{\vee}$ is locally free, and $s$ is regular if and only if the restriction $s_{|T} \colon \mathcal{E}_{|T} \rightarrow N^{\vee}_{T/Y}$ is an isomorphism. 

\subsection{Weakly regular cosections}

We now define the central notion of the paper : weakly regular cosections. This is a local interpolation between a regular section and a zero section, with the additional input that the underived critical locus is smooth.

\begin{definition}\label{wregdef}
Consider a triple $(Y,\mathcal E,s)$ as before, and let $T$ be the zero locus of $s$. Assume that $T$ is smooth. We say that the cosection $s$ is weakly regular if the following condition holds : for any point $t$ of $T$, there exists a neighborhood $U$ of $t$ in $Y$ and a vector bundle $\widetilde{\mathcal{E}}$ on $U$ such that $s_{|U}$ factors as
\[
\mathcal{E}_{|U} \twoheadrightarrow \widetilde{\mathcal{E}} \xrightarrow{\widetilde{s}} {\mathcal{I}_T}_{| U}
\]
where $(\tilde{\mathcal E},\tilde s)$ is regular, and the first arrow is surjective. Equivalently, $\mathcal{E}$ is the direct sum of two vector bundles, the cosection being zero on the first one and regular on the second one.
\end{definition}
\begin{remark}\label{rem1}
\begin{enumerate}
\item[(i)] If $T$ has codimension $1$ in $Y$, $s$ is automatically weakly regular. Indeed, it suffices to take for $\widetilde{\mathcal{E}}$ the locally free sheaf $\mathcal{I}_T$.
\item[(ii)] Let $s$ be a regular section on $Z$ whose vanishing locus $Y$ is smooth, and let $X$ be a smooth submanifold of $Z$ intersecting $Y$ cleanly, that is the scheme-theoretic intersection of $X$ and $Y$ is smooth\footnote{This is equivalent to the notion of \textit{linear intersection} of \cite{grivaux-documenta}.}. Then $s_{|X}$ is a weakly regular cosection of $\mathcal{E}_{|X}$. The corresponding kernel $\mathcal{N}$ is an excess bundle fitting in the exact sequence
\[
0 \to \mathcal{N} \to \left(N^\vee_{Y/Z}\right)_{|T} \to N^{\vee}_{ T/X} \to 0
\]
where $T=X \cap Y$. This is an easy calculation using local coordinates. 
\end{enumerate}
\end{remark}

\begin{proposition} \label{loc}
Given $(Y, \mathcal{E}, s)$ where $s$ is weakly regular, let $T$ be the vanishing locus of $s$ and set $\mathcal{N}=\ker(\mathcal{E}_{|T} \rightarrow N^{\vee}_{T/Y})$. Then :
\begin{enumerate}
\item[\emph{(i)}] $\mathcal{K}(\mathcal{E}, s)_{|T} \simeq \mathrm{Sym}(\mathcal{E}[1])$.
\item[\emph{(ii)}] $\mathcal{K}(\mathcal{E}, s)$ is locally quasi-isomorphic on $Y$ to the complex $j_{*}\,\mathrm{Sym} \,(\mathcal{N}[1])$ with zero differentials.
\item[\emph{(iii)}] For any $p$, the induced morphism
\[
\mathcal{H}^{-p} (\mathcal{K}(\mathcal{E}, s))_{|T} \rightarrow \mathcal{H}^{-p} (\mathcal{K}(\mathcal{E}, s)_{|T}) \simeq \wedge^p \mathcal{E}_{|T}
\] 
is injective and its image is $\wedge^p \mathcal{N}$. In particular, $\mathcal{H}^{-p} (\mathcal{K}(\mathcal{E}, s))$ is canonically isomorphic to $j_{*}\wedge^p \mathcal{N}$.
\end{enumerate}
\end{proposition}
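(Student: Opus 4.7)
The plan is to address the three statements in order, with the essential input being a local product decomposition of the Koszul complex provided by weak regularity.

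For (i), the Koszul differential is contraction by $s$, which vanishes identically on $T$. Each term $\wedge^p \mathcal{E}$ being locally free, $j^{*}$ acts term-wise on the complex, so its restriction is $\mathrm{Sym}(\mathcal{E}|_T[1])$ with zero differential.

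For (ii), I would pick, around any $t \in T$, a neighborhood $U$ and a decomposition $\mathcal{E}|_U = \mathcal{N}' \oplus \widetilde{\mathcal{E}}$ with $s|_U = (0,\widetilde{s})$, where $\widetilde{s}$ is a regular cosection cutting out $T \cap U$. Koszul complexes are multiplicative under such splittings, yielding
\[
\mathcal{K}(\mathcal{E}|_U, s|_U) \;\simeq\; \mathrm{Sym}(\mathcal{N}'[1]) \otimes_{\mathcal{O}_U} \mathcal{K}(\widetilde{\mathcal{E}}, \widetilde{s}).
\]
Regularity of $\widetilde{s}$ makes the augmentation $\mathcal{K}(\widetilde{\mathcal{E}}, \widetilde{s}) \to j_{*}\mathcal{O}_{T\cap U}$ a quasi-isomorphism, and tensoring with the termwise locally free complex $\mathrm{Sym}(\mathcal{N}'[1])$ preserves this. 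The projection formula identifies the target with $j_{*}\mathrm{Sym}(\mathcal{N}'|_{T \cap U}[1])$, carrying zero differential. Finally, since $\widetilde{s}$ is regular with smooth zero locus, $\widetilde{s}|_T \colon \widetilde{\mathcal{E}}|_T \to N^{\vee}_{T/Y}$ is an isomorphism (as noted just before \Cref{wregdef}), which forces $\mathcal{N}'|_T = \ker(s|_T) = \mathcal{N}$.

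For (iii), I would interpret the map in question as coming from the adjunction unit $\mathcal{K}(\mathcal{E},s) \to j_{*}j^{*}\mathcal{K}(\mathcal{E},s)$, which in degree $-p$ is just the termwise restriction $\wedge^p \mathcal{E} \to \wedge^p \mathcal{E}|_T$. In the local model of (ii), a class in $\mathcal{H}^{-p}(\mathcal{K}(\mathcal{E},s))$ is represented by a cocycle $\alpha \in \wedge^p \mathcal{N}' \subset \wedge^p \mathcal{E}$, and its image in $\wedge^p \mathcal{E}|_T$ is simply $\alpha|_T \in \wedge^p \mathcal{N}'|_T = \wedge^p \mathcal{N}$. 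Thus locally the map is the canonical inclusion $\wedge^p \mathcal{N} \hookrightarrow \wedge^p \mathcal{E}|_T$, which is injective with image exactly $\wedge^p \mathcal{N}$. The main subtlety, to be handled last, is that the identification in (ii) a priori depends on the chosen splitting; however, because $\wedge^p \mathcal{N}$ is a canonically defined subsheaf of $\wedge^p \mathcal{E}|_T$, the different local identifications of $\mathcal{H}^{-p}(\mathcal{K}(\mathcal{E},s))$ with $j_{*}\wedge^p \mathcal{N}$ must agree on overlaps, producing the asserted canonical global isomorphism.
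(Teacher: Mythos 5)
Your proposal is correct and follows essentially the same route as the paper: the local splitting $(\mathcal{E},s)\simeq(\widetilde{\mathcal{N}},0)\oplus(\widetilde{\mathcal{E}},\widetilde{s})$, multiplicativity of Koszul complexes, and the quasi-isomorphism $\mathcal{K}(\widetilde{\mathcal{E}},\widetilde{s})\to j_*\mathcal{O}_T$ are exactly the ingredients of the paper's argument. For (iii) you argue at the level of cocycle representatives and then glue using the canonicity of the subsheaf $\wedge^p\mathcal{N}\subset\wedge^p\mathcal{E}_{|T}$, whereas the paper packages the same verification into a commutative diagram; this is a purely presentational difference.
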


\begin{proof}
Point (i) is obvious since $s$ vanishes on $T$. For (ii), since the statement is local, we can write $s:\mathcal{E} \twoheadrightarrow \widetilde{\mathcal{E}} \xrightarrow{\widetilde{s}} \mathcal{I}_T$. Denote by $\widetilde{\mathcal{N}}$ the kernel of the morphism $\mathcal{E} \rightarrow \widetilde{\mathcal{E}}$. It is a locally free sheaf satisfying $\widetilde{\mathcal{N}}_{|T}=\mathcal{N}$. We can split locally $\mathcal{E}$ as $\widetilde{\mathcal{E}} \oplus \widetilde{\mathcal{N}}$, which gives an isomorphism
\[
(\mathcal{E}, s) \simeq (\widetilde{\mathcal{N}}, 0) \oplus (\widetilde{\mathcal{E}}, \widetilde s).
\]
Then, 
\[
\mathcal{K}(\mathcal{E}, s) \simeq \mathcal{K}(\widetilde{\mathcal{N}}, 0)\otimes_{\mathcal{O}_Y} \mathcal{K}(\widetilde{\mathcal{E}}, \widetilde s) .
\]
Since $\mathrm{Sym}\,(\widetilde{\mathcal{N}}[1])$ is a bounded complex of locally free sheaves and $\mathcal{K}(\widetilde{E}, \widetilde{s})$ is quasi-isomorphic to $ j_{*}\, \mathcal{O}_T$, we obtain that $\mathcal{K}(\mathcal{E}, s)$ is quasi-isomorphic to  
$\mathrm{Sym}\,(\widetilde{\mathcal{N}}[1])\otimes_{\mathcal{O}_Y}  j_{*}\, \mathcal{O}_T$ which is $j_{*}\, \mathrm{Sym}\,({\mathcal{N}[1]})$. 
\par \medskip
We can now prove (iii). We can argue locally. The morphism we want to consider is the first row of the diagram
\[\!\!\!\!\!\!\!\!\!
\small{
\xymatrix{
\mathcal{H}^{-p} (\mathcal{K}({\mathcal{E}}, s))_{|T} \ar[d]_-{\wr} \ar[r] & \mathcal{H}^{-p}(\mathcal{K}(\mathcal{E}, s)_{|T}) \ar[d]^-{\wr} \ar[r]^-{\sim} & \wedge^p \mathcal{E}_{|T} \ar[d]^-{\wr} \\
\mathcal{H}^{-p} (\mathcal{K}(\widetilde{\mathcal{N}}, 0) \otimes_{\mathcal{O}_Y} \mathcal{K}(\mathcal{\widetilde{E}}, s))_{|T} \ar[r] \ar[d]_-{\wr}& \mathcal{H}^{-p} \left(\mathcal{K}(\widetilde{\mathcal{N}}, 0)_{|T} \otimes_{\mathcal{O}_T} \mathcal{K}(\widetilde{\mathcal{E}}, s)_{|T}\right) \ar[d] \ar[r]^-{\sim}& \bigoplus_{r+s=P} \wedge^r \widetilde{\mathcal{N}}_{|T} \otimes_{\mathcal{O}_T} \wedge^s \widetilde{\mathcal{E}}_{|T} \ar[d] \\
\mathcal{H}^{-p}(\mathcal{K}(\widetilde{\mathcal{N}},0) \otimes_{\mathcal{O}_Y} j_*\mathcal{O}_T)_{|T} \ar[r]^-{\sim} & \mathcal{H}^{-p} (\mathcal{K}(\widetilde{\mathcal{N}}, 0)_{|T}) \ar@<2ex>[u] \ar[r]^-{\sim}&  \ar@<2ex>[u] \wedge^p  \widetilde{\mathcal{N}}_{|T}
}
}
\]
The bottom right diagram commute, when considering downward, but also upward arrows. This gives the result.
\end{proof}

\subsection{Formality}
Let us consider a triple $(Y,\mathcal{E}, s)$ where $s$ is weakly regular with vanishing locus $T$. It yields an exact sequence
\begin{equation} \label{yolo3}
0 \to {\mathcal{N}} \to \mathcal{E} _{|T} \to N^{\vee}_{T/Y} \to 0. 
\end{equation}
on the smooth manifold $T$. 
Let us consider the four following assertions : 
\begin{enumerate}
\item[(i)] If $d=\mathrm{codim}_Y(T)$, then the map $\wedge^d \mathcal{E}_{|T} \rightarrow \mathrm{det}\, N^{\vee}_{T/Y}$ splits.
\item[(ii)] $\mathcal{K}(\mathcal{E}, s)\to j_*\mathcal O_T$ splits.
\item[(iii)] $\mathcal{K}(\mathcal{E}, s)$ belongs to $j_{*}(\mathrm{D}(T))$.
\item[(iv)] $\mathcal{K}(\mathcal{E}, s) \simeq j_{*}\, \mathrm{Sym}(\mathcal{N}[1])$ in $\mathrm{D}(Y)$.
\item[(v)] The exact sequence \eqref{yolo3} splits. 
\end{enumerate}

\begin{theorem} \label{div}
The complex $\mathcal{K}(\mathcal{E}, s)$ is isomorphic to $j_{*}\, \mathrm{Sym}(\mathcal{N}[1])$ in $\mathrm{D}(Y)$
 if and only if the exact sequence \eqref{yolo3} splits (that is properties \emph{(ii)}--\emph{(iv)} are equivalent). Besides, there is a diagram of implications:
\[
\xymatrix @R=1em {\emph{(iv)} \ar@{<=>}[dd] \ar@{=>}[rd] &&& \\
& \emph{(iii)} \ar@{=>}[r]& \emph{(ii)}\ar@{=>}[r]&\emph{(i)}\\
\emph{(v)} \ar@{=>}[ru] &&&}
\]
and if $T$ has codimension one, all four properties \emph{(i)}--\emph{(iv)} are equivalent.
\end{theorem}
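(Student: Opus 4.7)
The main equivalence (iv) $\Leftrightarrow$ (v) is the heart of the theorem, and every remaining implication will follow from it together with one key technical input: the computation of $Lj^*$ applied to the canonical augmentation. Since $\mathcal{K}(\mathcal{E}, s)$ is a bounded complex of locally free sheaves and $s_{|T} = 0$, one has $Lj^*\mathcal{K}(\mathcal{E}, s) = j^*\mathcal{K}(\mathcal{E}, s) = \mathrm{Sym}(\mathcal{E}_{|T}[1])$, while $Lj^* j_*\mathcal{O}_T \simeq \mathrm{Sym}(N^\vee_{T/Y}[1])$ by a standard Koszul resolution argument. I plan to first establish, working locally with a weak regularity decomposition $\mathcal{E} = \widetilde{\mathcal{N}} \oplus \widetilde{\mathcal{E}}$, that $Lj^*$ sends the augmentation to the morphism
\[
\mathrm{Sym}(\mathcal{E}_{|T}[1]) \longrightarrow \mathrm{Sym}(N^\vee_{T/Y}[1])
\]
induced, via $\mathrm{Sym}(-[1])$, by the surjection in the excess sequence \eqref{yolo3}. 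The verification boils down to the fact that on such a chart the regular factor $\mathcal{K}(\widetilde{\mathcal{E}}, \widetilde{s})$ resolves $j_*\mathcal{O}_T$ and $\widetilde{s}_{|T} : \widetilde{\mathcal{E}}_{|T} \xrightarrow{\sim} N^\vee_{T/Y}$.

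For (v) $\Rightarrow$ (iv) I would use a chosen splitting of \eqref{yolo3} to produce the complementary projection $\pi: \mathcal{E}_{|T} \twoheadrightarrow \mathcal{N}$ satisfying $\pi_{|\mathcal{N}} = \mathrm{id}$, and define $\phi: \mathcal{K}(\mathcal{E}, s) \to j_*\mathrm{Sym}(\mathcal{N}[1])$ in $\mathrm{D}(Y)$ as the $Lj^* \dashv j_*$ adjoint of the algebra map $\mathrm{Sym}(\pi[1]): \mathrm{Sym}(\mathcal{E}_{|T}[1]) \to \mathrm{Sym}(\mathcal{N}[1])$. To check that $\phi$ is a quasi-isomorphism it suffices, by \Cref{loc}(iii), to compute the induced map on each $\mathcal{H}^{-p}$, which factors as
\[
j_*\wedge^p\mathcal{N} \hookrightarrow j_*\wedge^p\mathcal{E}_{|T} \xrightarrow{j_*\wedge^p\pi} j_*\wedge^p\mathcal{N},
\]
and this is the identity because $\pi$ splits the inclusion $\mathcal{N} \hookrightarrow \mathcal{E}_{|T}$.

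For the converse I would prove directly the stronger implication (iii) $\Rightarrow$ (v). Given an isomorphism $f: \mathcal{K}(\mathcal{E}, s) \simeq j_*G$ in $\mathrm{D}(Y)$, the full faithfulness of $j_*$ on sheaves supported on $T$ transports the augmentation to a unique $g: G \to \mathcal{O}_T$ in $\mathrm{D}(T)$; applying $Lj^*$ then yields an isomorphism
\[
\mathrm{Sym}(\mathcal{E}_{|T}[1]) \simeq G \otimes \mathrm{Sym}(N^\vee_{T/Y}[1])
\]
compatible, via the observation above and the functoriality of $Lj^*$, with the augmentations to $\mathrm{Sym}(N^\vee_{T/Y}[1])$. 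Reading this commutative square at $\mathcal{H}^{-1}$ produces an isomorphism $\mathcal{E}_{|T} \xrightarrow{\sim} \mathcal{N} \oplus N^\vee_{T/Y}$ whose projection onto the $N^\vee_{T/Y}$-factor coincides with the excess surjection up to the unit $\mathcal{H}^0(g) \in \mathrm{Aut}(\mathcal{O}_T)$, whence a splitting of \eqref{yolo3}. The same recipe handles (ii) $\Rightarrow$ (v) by feeding a section $\sigma: j_*\mathcal{O}_T \to \mathcal{K}(\mathcal{E}, s)$ through $Lj^*$ and reading its $\mathcal{H}^{-1}$-component as a section of $\mathcal{E}_{|T} \twoheadrightarrow N^\vee_{T/Y}$.

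The remaining statements are immediate: (iv) $\Rightarrow$ (iii) is tautological; (iv) $\Rightarrow$ (ii) follows from the obvious section $j_*\mathcal{O}_T \hookrightarrow j_*\mathrm{Sym}(\mathcal{N}[1])$ of the degree-$0$ projection; and (ii) $\Rightarrow$ (i) is obtained by applying $Lj^*$ to the section and reading its $\mathcal{H}^{-d}$-part as a section of $\wedge^d\mathcal{E}_{|T} \to \det N^\vee_{T/Y}$. When $T$ has codimension one, $d = 1$ and the map in (i) is literally the surjection $\mathcal{E}_{|T} \to N^\vee_{T/Y}$ of \eqref{yolo3}, so (i) and (v) collapse to the same condition and the chain of implications becomes a chain of equivalences. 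The main obstacle I anticipate is the bookkeeping behind the compatibility of paragraph one; once that compatibility is nailed down, every subsequent step reduces to a direct $\mathcal{H}^{-p}$-level computation.
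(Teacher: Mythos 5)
Your argument hinges on the ``key technical input'' that $Lj^*j_*\mathcal{O}_T\simeq \mathrm{Sym}(N^\vee_{T/Y}[1])$ ``by a standard Koszul resolution argument'', and this is where the proposal breaks. That isomorphism is precisely the formality of the derived self-intersection of $T$ in $Y$; it holds locally (or when $T$ is globally cut out by a regular section of a bundle), but it is false in general --- this is the content of the Arinkin--C\u{a}ld\u{a}raru obstruction, and indeed the whole point of this circle of papers is that objects which are locally of the form $\mathrm{Sym}(\mathcal{N}[1])$ need not be so globally. What is canonically true is only that $\mathcal{H}^{-p}(Lj^*j_*\mathcal{O}_T)\simeq\wedge^pN^\vee_{T/Y}$. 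This false input is load-bearing in your proof of (iii) $\Rightarrow$ (v): the displayed isomorphism $\mathrm{Sym}(\mathcal{E}_{|T}[1])\simeq G\otimes\mathrm{Sym}(N^\vee_{T/Y}[1])$ and the ensuing direct-sum decomposition of $\mathcal{H}^{-1}$ are not available. Since you route the essential converse of the main equivalence as (iv) $\Rightarrow$ (iii) $\Rightarrow$ (v), the theorem's central claim (iv) $\Leftrightarrow$ (v) is not established by your argument. Note also that (iii) $\Rightarrow$ (v) and (ii) $\Rightarrow$ (v) are \emph{not} asserted by the theorem: the diagram of implications deliberately stops at (ii) $\Rightarrow$ (i), the determinant-level shadow of the splitting, and the introduction singles out codimension one as the case where these conditions coalesce. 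You are therefore claiming strictly more than the statement, on the strength of a false lemma; if you believe the stronger implications, they need an argument that never invokes a global Koszul resolution of $j_*\mathcal{O}_T$.

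The paper's proof is engineered to avoid exactly this trap: it only ever restricts morphisms whose \emph{source} is a bounded complex of locally free sheaves, so that $Lj^*=j^*$ is computed termwise and no resolution of $j_*\mathcal{O}_T$ is needed. Concretely, (iv) $\Rightarrow$ (v) is obtained by adjoining the isomorphism $\mathcal{K}(\mathcal{E},s)\to j_*\mathrm{Sym}(\mathcal{N}[1])$ to a map $\mathrm{Sym}(\mathcal{E}_{|T}[1])\to\mathrm{Sym}(\mathcal{N}[1])$ and reading the commutative triangle $\mathcal{N}\to\mathcal{E}_{|T}\to\mathcal{N}$ on $\mathcal{H}^{-1}$ via \Cref{loc}(iii); and (ii) $\Rightarrow$ (i) is proved with $j^!$ and Grothendieck--Serre duality rather than with $Lj^*$ of a section $j_*\mathcal{O}_T\to\mathcal{K}(\mathcal{E},s)$. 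Your proof of (v) $\Rightarrow$ (iv) (adjoint of $\mathrm{Sym}(\pi[1])$, checked on cohomology sheaves via \Cref{loc}(iii)) does match the paper and is fine, as are the tautological implications (iv) $\Rightarrow$ (iii) $\Rightarrow$ (ii). To repair the proposal, replace your (iii) $\Rightarrow$ (v) by a direct proof of (iv) $\Rightarrow$ (v) along the lines above, and replace your (ii) $\Rightarrow$ (i) either by the duality computation or by a genuinely justified identification of the induced map on the top Tor sheaf --- in either case without ever writing $Lj^*j_*\mathcal{O}_T\simeq\mathrm{Sym}(N^\vee_{T/Y}[1])$.
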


\begin{proof}
The obvious implications are $\mathrm{(iii)} \Rightarrow \mathrm{(ii)}$ and $\mathrm{(iv)} \Rightarrow \mathrm{(ii)}$. 
\par \medskip
\fbox{$\mathrm{(iv)} \Rightarrow \mathrm{(v)}$}
For any morphism
$\mathcal{A} \rightarrow j_* \mathcal{B}$,
where $\mathcal{A}$ is a bounded complex of locally free sheaves on $Y$ and $\mathcal{B}$ is in $\mathrm{D}(T)$, we get by adjunction a morphism $\mathcal{A}_{|T} \rightarrow \mathcal{B}$ and for any $k$, the diagram
\[
\begin{tikzpicture}[>=latex]
	\node (A) at (0,2) {$\mathcal{H}^k(\mathcal{A})_{|T}$};        
	\node (B) at (0,0) {$\mathcal{H}^k(\mathcal{A}_{|T})$};        
	\node (C) at (3,1) {$\mathcal{H}^k(\mathcal{B})$};        
	
	\draw[->] (A) -- (B);         
	\draw[->] (A) -- (C);         
	\draw[->] (B) -- (C);         
\end{tikzpicture}
\]
commutes. Applying this to $\mathcal{A}=\mathcal{K}(\mathcal{E},s)$, $\mathcal{B}=\mathrm{Sym}\, (\mathcal{N}[1])$ and $k=-1$ together with \Cref{loc}(iii), we get a diagram
\[
\begin{tikzpicture}[>=latex]
	\node (A) at (0,2) {$\mathcal{N} $};        
	\node (B) at (0,0) {$\mathcal{E}_{|T}$};        
	\node (C) at (3,1) {$\mathcal{N}$};        
	
	\draw[->] (A) -- (B);         
	\draw[->] (A) -- (C);         
	\draw[->] (B) -- (C);         
\end{tikzpicture}
\]
which means that \eqref{yolo3} splits.
\par \medskip
\fbox{$\mathrm{(v)} \Rightarrow \mathrm{(iv)}$} There is a natural morphism from $\mathcal{K}(\mathcal{E}, s)$ to $j_* \mathrm{Sym}(\mathcal{N}[1])$ which is obtained by adjunction from the morphisms $\wedge^p \mathcal{E}_{|T} \to \wedge^p \mathcal{N}$ given by wedging $p$ times the retraction.
\par \medskip
\fbox{$\mathrm{(iii)}  \Rightarrow \mathrm{(ii)}$} Assume that $\mathcal{K}(\mathcal{E}, s)=j_{*} \theta$ for $\theta$ in $\mathrm{D}(T)$. There is a canonical morphism $\mathcal{O}_Y \rightarrow \mathcal K(\mathcal{E}, s)$ inducing the natural morphism $\mathcal{O}_Y \rightarrow \mathcal{O}_T$ in local cohomology of degree $0$. By adjunction, we have
\[
\mathrm{Hom}_{\mathrm{D}(Y)}(\mathcal{O}_Y,j_{*} \theta) \simeq \mathrm{Hom}_{\mathrm{D}(T)}(\mathcal{O}_T,\theta)
\]
so we get a morphism $\mathcal{O}_T \rightarrow \mathcal{\theta}$ inducing an isomorphism in local cohomology of degree zero and after applying $j_*$ we're done.
\par\medskip
\fbox{$\mathrm{(ii)}  \Rightarrow \mathrm{(i)}$} We follow the strategy of the proof of \cite[Prop. 8.1]{grivaux-documenta}. We have two arrows
\begin{align*}
\mathrm{Hom}_{\mathrm{D}(Y)}\left(j_* \mathcal{O}_T, \mathcal K(\mathcal{E}, s)\right) & \simeq \mathrm{Hom}_{\mathrm{D}(Y)}\left(\mathcal{O}_T, j^!\mathcal K(\mathcal{E}, s)\right) \\
& \simeq \mathbb{H}^0\left(j^*\mathcal K(\mathcal{E}, s) \overset{\mathbb{L}}{\otimes} \mathrm{det} \,N_{T/Y}[-d]\right) \\
& \rightarrow \mathbb{H}^{-d}\left(\mathrm{Sym}\,(\mathcal{E}_{|T}[1]) \otimes \mathrm{det} \,N_{T/Y}\right) \\
& \simeq \bigoplus_{i \geq d} \mathrm{H}^{i-d}\left(\wedge^i \mathcal{E}_{|T} \otimes \mathrm{det} \, N_{T/Y}\right) \\
& \rightarrow \mathrm{H}^0\left(\wedge^d \mathcal{E}_{|T} \otimes \mathrm{det} \, N_{T/Y}\right)
\end{align*}
and
\begin{align*}
\mathrm{Hom}_{\mathrm{D}(Y)}\left(j_* \mathcal{O}_T, j_* \mathcal{O}_T\right) & \simeq \mathrm{Hom}_{\mathrm{D}(Y)}\left(\mathcal{O}_T, j^! j_* \mathcal{O}_T\right) \\
& \simeq \mathrm{H}^0\left(j^* j_* \mathcal{O}_T \overset{\mathbb{L}}{\otimes} \mathrm{det} \,N_{T/Y}[-d]\right) \\
& = \mathrm{H}^d\left(j^* j_* \mathcal{O}_T \overset{\mathbb{L}}{\otimes} \mathrm{det} \,N_{T/Y}\right) \\
& \rightarrow  \mathrm{H}^0\left(\mathcal{H}^d\Big(j^* j_* \mathcal{O}_T \overset{\mathbb{L}}{\otimes} \mathrm{det} \,N_{T/Y}\Big)\right) \\
&= \mathrm{H}^0\left(\mathcal{T}or^d_{\mathcal{O}_Y}(\mathcal{O}_T, \mathcal{O}_T) \otimes \mathrm{det} \,N_{T/Y}\right) \\
&= \mathrm{H}^0 \left(\mathcal{O}_Y\right).
\end{align*}
A local computation yields that the diagram 
\[
\xymatrix{
\mathrm{Hom}_{\mathrm{D}(Y)}(j_* \mathcal{O}_T, \mathcal K(\mathcal{E}, s)) \ar[r] \ar[d] & \mathrm{H}^0(\wedge^d \mathcal{E}_{|T} \otimes \mathrm{det} \, N_{T/Y}) \ar[d] \\
\mathrm{Hom}_{\mathrm{D}(Y)}(j_* \mathcal{O}_T, j_* \mathcal{O}_T) \ar[r] & \mathrm{H}^0 (\mathcal{O}_Y)
}
\]
commutes, where the right map is induced by $\wedge^d \mathcal{E}_{|T} \rightarrow \wedge^d N^{\vee}_{T/Y}$. This gives the result. 
\par \medskip
If $T$ has codimension one, then $\mathrm{(i)} \Leftrightarrow \mathrm{(v)}$ and all propreties are equivalent. 
\end{proof}

\section{Examples} \label{3}

\subsection{Divisor case} \label{3.1}
The case where $T$ has codimension one is of particular interest, because it allows to describe explicitly 
$\mathcal{K}(\mathcal{E}, s)$ with the help of the exact sequence
\begin{equation} \label{yolo}
0 \rightarrow \widetilde{\mathcal{N}} \rightarrow \mathcal{E} \rightarrow \mathcal{I}_T \rightarrow 0.
\end{equation}
Indeed, for any integer $p$, we have an exact sequence 
\begin{equation} \label{simple}
0 \to \wedge^p \widetilde{\mathcal{N}} \to \wedge^p \mathcal{E} \to \wedge^{p-1} \widetilde{\mathcal{N}} \otimes \mathcal{I}_T \to 0.
\end{equation}
The differential of the Koszul complex is given by the composition
\[
\wedge^p \mathcal{E} \to \wedge^{p-1} \widetilde{\mathcal{N}} \otimes_{\mathcal{O}_Y} \mathcal{I}_T \hookrightarrow \wedge^{p-1} \widetilde{\mathcal{N}} \to \wedge^{p-1} \mathcal{E}
\]
using \eqref{simple} and the injection $\mathcal{I}_T \hookrightarrow \mathcal{O}_Y$. Using this description, we see immediately that
\[
\mathcal{H}^{-p} (\mathcal{K}(\mathcal{E}, s)) \simeq \wedge^p \widetilde{\mathcal{N}} /  \wedge^p \widetilde{\mathcal{N}} \otimes_{\mathcal{O}_Y} \mathcal{I}_T \simeq j_{*}\wedge^p \mathcal{N}
\]
as predicted by \Cref{loc}(i). 
\par \medskip
We can provide a simpler proof of the implication $\mathrm{(iii)} \Rightarrow \mathrm{(iv)}$ of \Cref{div} in this particular case. There is a morphism $(\widetilde{\mathcal{N}},0) \rightarrow (\mathcal{E}, s)$, hence a morphism
$\phi \colon \mathrm{Sym}\,(\widetilde{\mathcal{N}}[1]) \rightarrow \mathcal{K}(\mathcal{E}, s)$, which induces on local cohomology sheaves the natural projection
\[
\mathcal{H}^{-p}(\phi) \colon \wedge^p \widetilde{\mathcal{N}} \to \mathcal{H}^{-p}(\mathcal{K}(\mathcal{E}, s)) \simeq 	j_{*} \wedge^p \mathcal{N}
\]
via \Cref{loc}(ii).
By adjunction, we get a morphism
\begin{align*}
\mathrm{Hom}_{\mathrm{D}^{-}(Y)}\big(\mathrm{Sym}\,(\widetilde{\mathcal{N}}[1]),j_{*} \theta\big) &\simeq \mathrm{Hom}_{\mathrm{D}^{-}(T)}\big(j_{T/Y}^* \mathrm{Sym}\,(\widetilde{\mathcal{N}}[1]),\theta\big) \\
&= \mathrm{Hom}_{\mathrm{D}^{-}(T)}\big(\mathrm{Sym}\,({\mathcal{N}}[1]),\theta\big) \\
& \rightarrow \mathrm{Hom}_{\mathrm{D}^{-}(Y)}\big(j_{*} \mathrm{Sym}\,({\mathcal{N}}[1]),j_{*}\theta\big) \\
&=  \mathrm{Hom}_{\mathrm{D}^{-}(Y)}\big(j_{*} \mathrm{Sym}\,({\mathcal{N}}[1]),\mathcal{K}(\mathcal{E}, s)\big) 
\end{align*}
By \Cref{loc}(ii), the image of $\phi$ is an isomorphism on cohomology sheaves, so it is an isomorphism in the derived category $\mathrm{D}(Y)$.
\par \medskip
We end this section giving an explicit description of the length two truncations of $\mathcal{K}(\mathcal{E},s)$. Let $\overline{T}$ be the first formal neighborhood of $T$ in $Y$. Let $k \colon \overline{T} \hookrightarrow Y$ be the closed immersion of $\overline{T}$ in $Y$. 

\begin{definition}
The element $\delta$ in $\mathrm{Hom}_{\mathrm{D}(Y)}(j_* \mathcal{O}_T, j_* \mathcal N[2])$ is the composition of the extension class of the exact sequence
\begin{equation} \label{ati}
0 \to j_* N^{\vee}_{T/Y} \to k_* \mathcal{O}_{\overline{T}} \to j_* \mathcal{O}_T \to 0.
\end{equation}
 with the extension class of the restriction of \eqref{yolo} to $T$.
\end{definition}

\begin{proposition} \label{2}
For any nonnegative integer $p$, the morphism from $j_* \wedge^p \mathcal{N}$ to $j_* \wedge^{p+1} \mathcal{N} [2]$ given by the truncation distinguished triangle
\[
j_* \wedge^{p+1} \mathcal{N} \,[p+1] \to \tau^{[-(p+1), -p]} \, \mathcal{K}(\mathcal{E}, s) \to j_* \wedge^{p} \mathcal{N} \,[p] \xrightarrow{+1}
\]
is given by the composition
\[
j_* \wedge^{p} \mathcal{N} \,[p] \xrightarrow{\wedge^{p} \widetilde{\mathcal{N}}[p] \overset{\mathbb{L}}{\otimes} \delta} \wedge^{p} \widetilde{\mathcal{N}} \,[p]  \otimes j_* \mathcal{N}[2] \simeq j_* \left(\wedge^{p} \mathcal{N} \,[p] \otimes \mathcal{N}[2] \right) \xrightarrow{\, \wedge \, } j_*  \wedge^{p+1} \mathcal{N} \,[p+2].
\] 
\end{proposition}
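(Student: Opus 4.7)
The plan is to compute the boundary morphism of the truncation triangle as an explicit Yoneda product of two extension classes, and then rewrite that product in terms of $\delta$ and the wedge map.

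First, using \eqref{simple}, I would identify $\tau^{[-(p+1),-p]}\mathcal{K}(\mathcal{E},s)$ with the two-term complex $A \xrightarrow{d'} B$ concentrated in degrees $-(p+1)$ and $-p$, with $B = \wedge^p \widetilde{\mathcal{N}}$, $A = \wedge^{p+1}\mathcal{E}/(\wedge^{p+1}\widetilde{\mathcal{N}} \otimes \mathcal{I}_T)$, and $\mathrm{im}(d') = \wedge^p\widetilde{\mathcal{N}} \otimes \mathcal{I}_T \subset B$. The standard description of the boundary of such a two-term complex expresses the connecting morphism as the Yoneda product of the extension classes of the two short exact sequences
\begin{align*}
(\mathrm{S1}) & \quad 0 \to \wedge^p \widetilde{\mathcal{N}} \otimes \mathcal{I}_T \to \wedge^p \widetilde{\mathcal{N}} \to j_* \wedge^p \mathcal{N} \to 0, \\
(\mathrm{S2}) & \quad 0 \to j_* \wedge^{p+1} \mathcal{N} \to A \to \wedge^p \widetilde{\mathcal{N}} \otimes \mathcal{I}_T \to 0,
\end{align*}
arising respectively from $\mathrm{coker}(d')$ and $\ker(d')/\mathrm{im}(d')$.

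Next, denote by $\alpha_1: \mathcal{I}_T \to \widetilde{\mathcal{N}}[1]$ the class of \eqref{yolo} and by $\beta: j_*\mathcal{O}_T \to \mathcal{I}_T[1]$ the class of the fundamental sequence $0 \to \mathcal{I}_T \to \mathcal{O}_Y \to j_*\mathcal{O}_T \to 0$. Then (S1), being the tensor product of the fundamental sequence by the locally free sheaf $\wedge^p\widetilde{\mathcal{N}}$, has class $\mathrm{id}_{\wedge^p\widetilde{\mathcal{N}}} \otimes \beta$. For (S2), I would observe that it is the pushout of \eqref{simple} at level $p+1$ along the quotient $\wedge^{p+1}\widetilde{\mathcal{N}} \twoheadrightarrow j_*\wedge^{p+1}\mathcal{N}$, and that the wedge products $\wedge^p\widetilde{\mathcal{N}} \otimes \wedge^{\bullet}\mathcal{E} \to \wedge^{p+\bullet}\mathcal{E}$ furnish a morphism from the tensor product of \eqref{yolo} with $\wedge^p\widetilde{\mathcal{N}}$ to \eqref{simple} at level $p+1$. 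This identifies the class of (S2) with $\mathrm{res} \circ \wedge \circ (\mathrm{id}_{\wedge^p\widetilde{\mathcal{N}}} \otimes \alpha_1)$, where $\mathrm{res}: \wedge^{p+1}\widetilde{\mathcal{N}} \to j_*\wedge^{p+1}\mathcal{N}$ is the restriction.

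Finally, I would identify $\delta$ with the composition $\mathrm{res} \circ \alpha_1[1] \circ \beta : j_*\mathcal{O}_T \to \widetilde{\mathcal{N}}[2] \to j_*\mathcal{N}[2]$ by exhibiting a morphism of 4-term exact sequences from the Yoneda splice $0 \to \widetilde{\mathcal{N}} \to \mathcal{E} \to \mathcal{O}_Y \to j_*\mathcal{O}_T \to 0$ (which represents $\alpha_1[1]\circ\beta$) to $0 \to j_*\mathcal{N} \to j_*\mathcal{E}_{|T} \to k_*\mathcal{O}_{\overline{T}} \to j_*\mathcal{O}_T \to 0$ (which represents $\delta$ by definition), with vertical arrows given by restriction to $T$ and to $\overline{T}$. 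Substituting this into the Yoneda product from the previous step and invoking the naturality of the wedge product under restriction (equivalently, the projection formula identification $\wedge^p\widetilde{\mathcal{N}} \otimes j_*\mathcal{N} \simeq j_*(\wedge^p\mathcal{N} \otimes \mathcal{N})$ followed by $j_*\wedge$) rewrites the boundary morphism as the composition stated in the proposition. The main obstacle is establishing the compatibility of the class of \eqref{simple} at level $p+1$ with the wedge multiplication, a statement about the filtration on $\wedge^{\bullet}\mathcal{E}$ induced by $\widetilde{\mathcal{N}}$ that can be verified directly or locally where \eqref{yolo} splits; careful bookkeeping of the Yoneda shifts is the other delicate point.
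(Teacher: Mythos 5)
Your argument is correct and is essentially the paper's proof in Yoneda-theoretic packaging: the paper likewise rewrites $\delta$ as the splice of the fundamental sequence $0 \to \mathcal{I}_T \to \mathcal{O}_Y \to j_*\mathcal{O}_T \to 0$ with \eqref{yolo} followed by restriction to $T$, realizes $\mathrm{cone}(\delta)[-1]$ tensored with $\wedge^p\widetilde{\mathcal{N}}[p]$ as the explicit three-term complex $\wedge^p\widetilde{\mathcal{N}}\otimes\widetilde{\mathcal{N}}\otimes\mathcal{I}_T \to \wedge^p\widetilde{\mathcal{N}}\otimes\mathcal{E} \to \wedge^p\widetilde{\mathcal{N}}$, and maps it by wedge multiplication onto the three-term model $\wedge^{p+1}\widetilde{\mathcal{N}}\otimes\mathcal{I}_T \to \wedge^{p+1}\mathcal{E} \to \wedge^p\widetilde{\mathcal{N}}$ of the truncation --- exactly the commutative wedge diagram underlying your identification of the class of (S2). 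Your decomposition of the connecting morphism into the Yoneda product of (S1) and (S2), and your verification of the splice description of $\delta$ by a morphism of four-term exact sequences, amount to a more atomized bookkeeping of the same computation (and in fact supply details, such as that splice identification, which the paper only asserts).
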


\begin{proof}
The morphism $\delta$ can be written as the composition of the extension class of 
\[
0 \to j_* \mathcal{I}_T \to \mathcal{O}_Y \to j_* \mathcal{O}_T \to 0
\]
together with \eqref{yolo} and then the restriction to $T$. Hence $\mathrm{cone}\,(\delta)[-1]$ is isomorphic to 
\[
\widetilde{\mathcal{N}} \otimes \mathcal{I}_T \to \mathcal{E}\to \mathcal{O}_Y
\]
where $\mathcal{O}_Y$ sits in degree $0$. After tensorizing with $\wedge^{p} \widetilde{\mathcal{N}} [p] $ we get the complex.
\[
\wedge^p \widetilde{\mathcal{N}} \otimes \widetilde{\mathcal{N}} \otimes \mathcal{I}_T \to \wedge^p \widetilde{\mathcal{N}} \otimes \mathcal{E} \to  \wedge^p \widetilde{\mathcal{N}}
\]
Then we can consider the morphism
\[
\xymatrix{
\wedge^p \widetilde{\mathcal{N}} \otimes \widetilde{\mathcal{N}} \otimes \mathcal{I}_T \ar[r] \ar[d] &  \wedge^p \widetilde{\mathcal{N}} \otimes \mathcal{E} \ar[r] \ar[d]  & \wedge^p  \widetilde{\mathcal{N}} \ar@{=}[d] \\
\wedge^{p+1} \widetilde{\mathcal{N}} \otimes \mathcal{I}_T \ar[r] &\wedge^{p+1} \mathcal{E} \ar[r] & \wedge^{p} \widetilde{\mathcal{N}} 
}
\]
which corresponds exactly to the last step of the symmetrization process. The bottom line is exactly the two step troncature of $\mathcal{K}(s, \mathcal{E})$, whence the result. 
\end{proof}
\subsection{Blowups} \label{3.2}

Let $\hat Y=\mathrm{Bl}_TY$ be the blow-up of $Y$ along $T$, and let 
\[
\xymatrix{E\ar[r]^-k\ar[d]_-q&\hat Y\ar[d]^-p \\
T\ar[r]^-j&Y}
\] 
be the corresponding diagram, where $E=\mathbb{P}(N_{T/Y})$ is the  exceptional divisor. The conormal excess bundle $F$ of the blowup is defined by the short exact sequence
\begin{equation} \label{excess}
0 \rightarrow F \rightarrow q^* N^{\vee}_{T/Y} \rightarrow N^{\vee}_{E/\widehat{Y}} \rightarrow 0.
\end{equation}
Since $N^{\vee}_{E/\widehat{Y}} =\mathcal{O}_{\mathbb{P}(N_{T/Y})}(1)$, 
\eqref{excess} is the relative Euler exact sequence \eqref{euler} for the projective bundle $E=\mathbb{P}(N_{T/X})$, and $F$ is the twisted relative cotangent bundle $\Omega^1_{E/T}(1)$. 
In this section, we study the complex $p^{\star} j_{\star} \mathcal{O}_T$. It has been known for a while that 
\[
\mathcal{H}^{-k}(p^* j_* \mathcal{O}_T) \simeq \wedge^k F
\] 
(see \cite[\S 15]{Borel-Serre}, where  Grothendieck calls $F^*$ what we call $F$). Then we obtain the following.

\begin{proposition}
The complex $p^{*} j_{*} \mathcal{O}_T$ in $\mathrm{D}(\widehat{Y})$ does not belong to $k_* \mathrm{D}(E)$ if $T$ has codimension at least $2$.
\end{proposition}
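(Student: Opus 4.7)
The idea is to compute the derived pullback $\mathbb{L}k^* \mathcal{C}$ with $\mathcal{C} := p^* j_* \mathcal{O}_T$ in two different ways and derive a contradiction at the level of the cohomology sheaf in degree $-1$. Suppose by way of contradiction that $\mathcal{C} \simeq k_* \theta$ for some $\theta \in \mathrm{D}(E)$. Since $\mathcal{H}^0(\mathcal{C}) = k_* \mathcal{O}_E$ and $\mathcal{H}^{-1}(\mathcal{C}) = k_* F$ (by \Cref{loc} applied locally on $Y$, where $\mathcal{C}$ is a Koszul complex of the weakly regular cosection $p^*(V \to \mathcal{O}_Y)$ with excess bundle $F$), we must have $\mathcal{H}^0(\theta) = \mathcal{O}_E$ and $\mathcal{H}^{-1}(\theta) = F$.

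On one hand, $p \circ k = j \circ q$ and $j$ is a regular closed embedding of the smooth subvariety $T$, so
\[
\mathbb{L}k^* \mathcal{C} \simeq \mathbb{L}q^* \mathbb{L}j^* j_* \mathcal{O}_T,
\]
and the standard conormal identification $\mathcal{H}^{-1}(\mathbb{L}j^* j_* \mathcal{O}_T) \simeq N^{\vee}_{T/Y}$, combined with flatness of $q$, gives $\mathcal{H}^{-1}(\mathbb{L}k^* \mathcal{C}) \simeq q^* N^{\vee}_{T/Y}$. On the other hand, the length-one Koszul resolution $0 \to \mathcal{O}_{\widehat{Y}}(-E) \to \mathcal{O}_{\widehat{Y}} \to k_* \mathcal{O}_E \to 0$ restricts to $E$ as a two-term complex with zero differential, yielding a canonical splitting $\mathbb{L}k^* k_* \mathcal{O}_E \simeq \mathcal{O}_E \oplus N^{\vee}_{E/\widehat{Y}}[1]$. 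Hence $\mathbb{L}k^* k_* \theta \simeq \theta \oplus \theta \otimes N^{\vee}_{E/\widehat{Y}}[1]$, and taking $\mathcal{H}^{-1}$ yields
\[
\mathcal{H}^{-1}(\mathbb{L}k^* \mathcal{C}) \simeq \mathcal{H}^{-1}(\theta) \oplus \mathcal{H}^0(\theta) \otimes N^{\vee}_{E/\widehat{Y}} \simeq F \oplus N^{\vee}_{E/\widehat{Y}}.
\]

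Comparing the two expressions gives an isomorphism of vector bundles $q^* N^{\vee}_{T/Y} \simeq F \oplus N^{\vee}_{E/\widehat{Y}}$ on $E$. Restricting to a fiber of $q\colon E \to T$ (a $\mathbb{P}^{r-1}$ with $r = \mathrm{codim}_Y(T) \geq 2$) this becomes
\[
\mathcal{O}_{\mathbb{P}^{r-1}}^{r} \simeq \Omega^1_{\mathbb{P}^{r-1}}(1) \oplus \mathcal{O}_{\mathbb{P}^{r-1}}(1),
\]
since $F = \Omega^1_{E/T}(1)$ and $N^{\vee}_{E/\widehat{Y}} = \mathcal{O}_E(1)$. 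Such an isomorphism would display $\Omega^1_{\mathbb{P}^{r-1}}(1)$ as a quotient of a trivial bundle, forcing it to be globally generated; but $\mathrm{H}^0(\mathbb{P}^{r-1}, \Omega^1_{\mathbb{P}^{r-1}}(1)) = 0$ by the Euler sequence for $r \geq 2$, a contradiction.

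The main technical point is the derivation of the two canonical expressions for $\mathcal{H}^{-1}(\mathbb{L}k^* \mathcal{C})$, in particular the canonical splitting $\mathbb{L}k^* k_* \mathcal{O}_E \simeq \mathcal{O}_E \oplus N^{\vee}_{E/\widehat{Y}}[1]$ coming from the degeneration of the Koszul resolution after restriction to $E$; once these are in hand, the argument reduces to the classical fact that the Euler sequence is non-split for $r \geq 2$.
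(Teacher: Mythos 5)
Your proof is correct, and it takes a genuinely different route from both proofs in the paper. The paper's first proof identifies $p^{*}j_{*}\mathcal{O}_T$ locally with the Koszul complex $\mathcal{K}(p^{*}\mathcal{E},p^{*}s)$ of a weakly regular cosection vanishing on the divisor $E$ and then invokes the codimension-one case of \Cref{div}, where membership in $k_{*}\mathrm{D}(E)$ is equivalent to the splitting of the excess sequence \eqref{excess}; the second proof reinterprets $p^{*}j_{*}\mathcal{O}_T$ as a derived intersection and cites \cite[Prop.~8.1]{grivaux-documenta}. You bypass this machinery entirely: assuming $\mathcal{C}=k_{*}\theta$, you compute $\mathcal{H}^{-1}$ of the derived restriction $k^{*}\mathcal{C}$ once via $p\circ k=j\circ q$ (getting $q^{*}N^{\vee}_{T/Y}$) and once via the degeneration of $k^{*}k_{*}(-)$ for a divisor (getting $\mathcal{H}^{-1}(\theta)\oplus\mathcal{H}^{0}(\theta)\otimes N^{\vee}_{E/\widehat{Y}}=F\oplus N^{\vee}_{E/\widehat{Y}}$). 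Note that your comparison only yields an \emph{abstract} isomorphism $q^{*}N^{\vee}_{T/Y}\simeq F\oplus N^{\vee}_{E/\widehat{Y}}$, which is a priori weaker than the splitting of \eqref{excess}, but your fiberwise global-generation argument correctly rules out even this weaker statement, so nothing is lost. Two minor points deserve a line each: the splitting $k^{*}k_{*}\theta\simeq\theta\oplus(\theta\otimes N^{\vee}_{E/\widehat{Y}})[1]$ for a general object $\theta$ (not just $\mathcal{O}_E$) follows by tensoring $k_{*}\theta$ with the resolution $\mathcal{O}_{\widehat{Y}}(-E)\to\mathcal{O}_{\widehat{Y}}$ and observing that the differential, being multiplication by the equation of $E$, vanishes on a complex of $\mathcal{O}_E$-modules; and passing from $\mathcal{H}^{i}(k_{*}\theta)$ to $\mathcal{H}^{i}(\theta)$ uses that $k_{*}$ is exact and fully faithful on sheaves. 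What your approach buys is a self-contained elementary argument; what the paper's buys is that the obstruction is set up once, in \Cref{div}, and reused for all the examples of \Cref{3}.
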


We present two proof of this result, one relying on Koszul complexes, the second one on derived intersections.

\begin{proof}[Proof 1]
Since the statement is local on $Y$, we can assume that $T$ is the vanishing locus of a regular cosection of a vector bundle $\mathcal{E}$ on $Y$ such that $\mathcal{E}_{|T}=N^{\vee}_{T/Y}$. Then $p^*j_{*} \mathcal{O}_T$ is isomorphic to $\mathcal K(p^{*} \mathcal{E}, p^{*} s)$. Besides, the exact sequence \eqref{yolo3} attached to the couple $(p^*\mathcal{E}, p^*s)$ is exactly \eqref{excess}. It is never split on any fiber of $\mathbb{P}(N_{T/Y})$, so it is never split on $E$. Then we apply \Cref{div}.
\end{proof} 

\begin{proof}[Proof 2]
We can decompose the projection $p$ throught the graph construction and the projection
\[
\widehat{Y} \xrightarrow{(p, \mathrm{id})} Y \times \widehat{Y} \xrightarrow{\mathrm{pr}_1} Y
\]
so that
\[
p^* j_*  \mathcal{O}_T =(p, \mathrm{id})^* \,j_{T \times \widehat{Y} / Y \times \widehat{Y}*} \mathcal{O}_{T \times \widehat{Y}}.
\]
Hence $p^*j_* \mathcal{O}_T$ is the derived intersection of $T \times Y$ and $\widehat{Y}$ in $Y \times \widehat{Y}$, considered in the derived category of $\widehat{Y}$. Besides, the excess conormal sequence attached to this intersection is the exact sequence \eqref{excess} which is never split. Thanks to \cite[Prop. 8.1]{grivaux-documenta}, $p^*j_* \mathcal{O}_T$ does not belong to $k_* (\mathrm{D}(E))$.
\end{proof}

\subsection{Derived projective intersections} \label{3.3}
Let $W$ be a finite dimensional complex vector space. On $\mathbb{P}(W)$, we have the Euler exact sequence
\begin{equation} \label{euler}
0 \rightarrow \Omega^1(1) \rightarrow W^* \otimes_{\mathbb{C}} \mathcal{O}\rightarrow \mathcal{O}(1) \rightarrow 0
\end{equation}
where we drop everywhere the subscript $\mathbb{P}(W)$ to lighten notation. The vector bundle $ \mathcal{E}=\Omega^1 (1) \boxtimes \mathcal{O}(-1)$ on $\mathbb{P}(W) \times\mathbb{P}(W) $ is endowed with a natural cosection $s$ given by the morphism
\begin{align*}
\Omega^1 (1) \boxtimes \mathcal{O}(-1) &\rightarrow (W^* \otimes_{\mathbb{C}} \mathcal{O}) \boxtimes (W \otimes_{\mathbb{C}} \mathcal{O}) \\
&=(W^* \otimes W) \otimes_{\mathbb{C}} \mathcal{O}_{\mathbb{P}(W) \times \mathbb{P}(W)} \\
& \rightarrow \mathcal{O}_{\mathbb{P}(W) \times \mathbb{P}(W)}.
\end{align*}
It is well-known and easy to check that this cosection is regular, and its vanishing locus is the diagonal of $\mathbb{P}(W)$ ({see} \cite{Beilinson}, \cite{Orlov}).

\begin{proposition} \label{brique}
Let $U$ be a Zariski open subset of $\mathbb{P}(W)$. Let $X$ and $Y$ be closed subschemes of $U$ intersecting cleanly along $T$, and let $\delta$ be the diagonal injection of $U$. Then in $\mathrm{D}(U^2)$ we have \[
\delta_{*} (\mathcal{O}_X \overset{\mathbb{L}}{\otimes}_{\mathcal{O}_{U}} \mathcal{O}_Y) \simeq j_{X \times Y / U^2*}\mathcal{K}(\mathcal{E}_{| X \times Y}, s_{ | X \times Y}).\]. 
\end{proposition}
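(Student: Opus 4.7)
The plan is to use the diagonal trick together with the Koszul resolution of the diagonal provided by $\mathcal{K}(\mathcal{E},s)$ itself. The point is that the derived intersection $\mathcal{O}_X\overset{\mathbb{L}}{\otimes}_{\mathcal{O}_U}\mathcal{O}_Y$, once pushed forward along $\delta$, can be rewritten as a derived tensor product on $U^2$ in which $\delta_*\mathcal{O}_U$ appears as one of the factors. This factor can then be replaced by the bounded locally free Koszul complex, after which the restriction to $X\times Y$ drops out formally.

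More precisely, I would proceed in four steps. First, since $s$ is a regular cosection on $\mathbb{P}(W)^2$ whose zero locus is the diagonal of $\mathbb{P}(W)$, its restriction to the open subset $U^2$ is still regular with zero locus $\delta(U)$; so $\mathcal{K}(\mathcal{E},s)$ is a bounded locally free resolution of $\delta_*\mathcal{O}_U$ in $\mathrm{D}(U^2)$. Second, observing that $\delta^*(\mathcal{O}_X\boxtimes\mathcal{O}_Y)\simeq \mathcal{O}_X\overset{\mathbb{L}}{\otimes}_{\mathcal{O}_U}\mathcal{O}_Y$, the projection formula along $\delta$ gives
\[
\delta_*\bigl(\mathcal{O}_X\overset{\mathbb{L}}{\otimes}_{\mathcal{O}_U}\mathcal{O}_Y\bigr)\simeq \delta_*\mathcal{O}_U\overset{\mathbb{L}}{\otimes}_{\mathcal{O}_{U^2}}(\mathcal{O}_X\boxtimes\mathcal{O}_Y).
\]

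Third, I substitute $\mathcal{K}(\mathcal{E},s)$ for $\delta_*\mathcal{O}_U$; because $\mathcal{K}(\mathcal{E},s)$ is a bounded complex of locally free sheaves, the derived tensor product coincides with the honest tensor product, so
\[
\delta_*\mathcal{O}_U\overset{\mathbb{L}}{\otimes}_{\mathcal{O}_{U^2}}(\mathcal{O}_X\boxtimes\mathcal{O}_Y)\simeq \mathcal{K}(\mathcal{E},s)\otimes_{\mathcal{O}_{U^2}} j_{X\times Y/U^2\,*}\mathcal{O}_{X\times Y}.
\]
Finally, applying the projection formula along the closed immersion $j_{X\times Y/U^2}$ transforms this into
\[
j_{X\times Y/U^2\,*}\bigl(j_{X\times Y/U^2}^{*}\mathcal{K}(\mathcal{E},s)\bigr)=j_{X\times Y/U^2\,*}\,\mathcal{K}\bigl(\mathcal{E}_{|X\times Y},s_{|X\times Y}\bigr),
\]
which is the desired identification.

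The only real subtlety, and the point I would be most careful about, is the bookkeeping of derived versus underived tensor products and pullbacks, in particular the verification that the two projection formulas can be applied without further flatness hypotheses on $X$ or $Y$ inside $U$. This is where the Koszul resolution does all the work: once $\delta_*\mathcal{O}_U$ has been replaced by a bounded complex of locally free sheaves on $U^2$, every tensor product and every pullback appearing afterwards is automatically derived, so the chain of isomorphisms above holds in $\mathrm{D}(U^2)$ with no further assumptions. The clean intersection hypothesis is not used in the proof of this particular identification; it enters only afterwards, through Remark \ref{rem1}(ii), to guarantee that $s_{|X\times Y}$ is weakly regular and hence that the machinery of \Cref{sec2} applies to the resulting Koszul complex.
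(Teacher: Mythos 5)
Your proof is correct and follows essentially the same route as the paper: identify $\mathcal{O}_X \overset{\mathbb{L}}{\otimes}_{\mathcal{O}_U} \mathcal{O}_Y$ with $\delta^*\mathcal{O}_{X\times Y}$, apply the projection formula along $\delta$, replace $\delta_*\mathcal{O}_U$ by the bounded locally free Koszul resolution so that the derived tensor product becomes underived, and conclude by the projection formula along $j_{X\times Y/U^2}$. Your closing remark that clean intersection is not needed here but only to ensure weak regularity of $s_{|X\times Y}$ matches the paper's own remark following the proposition.
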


\begin{remark}
The vanishing locus of the cosection $s_{|X \times Y}$ is $\Delta_T$. Thanks to~\Cref{rem1}(ii), since $X\times Y$ and $\Delta_{U}$ intersect cleanly, this cosection is weakly regular.
\end{remark}

\begin{proof}
We have
\[ \mathcal{O}_X \overset{\mathbb{L}}{\otimes}_{\mathcal{O}_{U}} \mathcal{O}_Y \simeq \delta^* (\mathcal{O}_X \boxtimes \mathcal{O}_Y)=\delta^*\, \mathcal{O}_{X\times Y}
\]
whence
\begin{align*}
\delta_*(\mathcal{O}_X \overset{\mathbb{L}}{\otimes}_{\mathcal{O}_{U}} \mathcal{O}_Y) &\simeq \mathcal{O}_{X \times Y} \overset{\mathbb{L}}{\otimes}_{\mathcal{O}_{U^2}} \mathcal{O}_{\Delta_{U}} \\
 &\simeq \mathcal{O}_{X \times Y} \overset{\mathbb{L}}{\otimes}_{\mathcal{O}_{U^2}} \mathcal{K}(\mathcal{E}, s) \\
& \simeq \mathcal{O}_{X \times Y} {\otimes}_{\mathcal{O}_{U^2}} \mathcal{K}(\mathcal{E}, s) \\
& \simeq j_{X \times Y / U^2*} \,\mathcal{K}(\mathcal{E}_{| X\times Y}, s_{|X\times Y}).\qedhere
\end{align*}
\end{proof}

\section{Projective self-intersections} \label{4}
We place ourselves in the setting of \Cref{proj}, with $X=Y$ smooth of dimension $d$ in $\mathbb{P}(W)$, where $\dim W = N+1$, and $U=\mathbb{P}(W)$. Let $c=N-d$ be the codimension of $X$ in $\mathbb{P}(W)$. 

\subsection{The canonical complex (I)} \label{can1}
We recall the following well-known fact (it is the easy case of Fujita's conjecture).
\begin{lemma}  \label{fujita}
The line bundle $K_X(d+1)$ is globally generated.
\end{lemma}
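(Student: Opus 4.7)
The plan is to identify $K_X(d+1)$ with the determinant of the twisted normal bundle $N_{X/\mathbb{P}(W)}(-1)$ via adjunction, and then observe that the latter is a quotient of a trivial bundle thanks to the Euler sequence. What makes this the easy case is the very-ampleness of $\mathcal{O}_X(1)$: it lets us bypass any vanishing argument.

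First, I would apply adjunction along the smooth embedding $X \hookrightarrow \mathbb{P}(W)$ to write
\[
K_X \simeq K_{\mathbb{P}(W)}|_X \otimes \det N_{X/\mathbb{P}(W)} \simeq \mathcal{O}_X(-N-1) \otimes \det N_{X/\mathbb{P}(W)}.
\]
Twisting by $\mathcal{O}_X(d+1)$ and using $d+1-(N+1)=-c$ yields
\[
K_X(d+1) \simeq \det N_{X/\mathbb{P}(W)} \otimes \mathcal{O}_X(-c).
\]
Since $N_{X/\mathbb{P}(W)}$ has rank $c$, the identity $\det(\mathcal{F}\otimes L)\simeq \det\mathcal{F}\otimes L^{\otimes \mathrm{rk}(\mathcal{F})}$ rewrites the right-hand side as $\det\bigl(N_{X/\mathbb{P}(W)}(-1)\bigr)$.

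Next, I would show that $N_{X/\mathbb{P}(W)}(-1)$ is globally generated. Dualizing the Euler sequence \eqref{euler} produces the surjection
\[
W \otimes \mathcal{O}_{\mathbb{P}(W)} \twoheadrightarrow T_{\mathbb{P}(W)}(-1).
\]
Restricting to $X$ and composing with the normal bundle quotient $T_{\mathbb{P}(W)}(-1)|_X \twoheadrightarrow N_{X/\mathbb{P}(W)}(-1)$ presents $N_{X/\mathbb{P}(W)}(-1)$ as a quotient of a trivial bundle on $X$, and therefore as a globally generated sheaf.

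Finally, the top exterior power of a globally generated vector bundle is itself globally generated (wedge the generating sections), so $\det\bigl(N_{X/\mathbb{P}(W)}(-1)\bigr) \simeq K_X(d+1)$ is globally generated. The main obstacle here is cosmetic: it lies entirely in bookkeeping the twists so that adjunction plus a single dual of the Euler sequence produce precisely the Fujita value $d+1$; no cohomological vanishing is required.
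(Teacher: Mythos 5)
Your proof is correct, but it takes a genuinely different route from the paper. The paper argues by induction on $d$: it picks a smooth hyperplane section $H$ through a given point, uses the sequence $0 \to K_X(d) \to K_X(d+1) \to j_*K_H(d) \to 0$ together with the Kodaira--Nakano vanishing $H^1(X,K_X(d))=0$ to lift a section of $K_H(d)$ nonvanishing at the point, and concludes by the inductive hypothesis on $H$. You instead identify $K_X(d+1)$ with $\det\bigl(N_{X/\mathbb{P}(W)}(-1)\bigr)$ via adjunction and the twist bookkeeping $d+1-(N+1)=-c=-\operatorname{rk}N_{X/\mathbb{P}(W)}$, and then exhibit $N_{X/\mathbb{P}(W)}(-1)$ as a quotient of $W\otimes\mathcal{O}_X$ by composing the dual Euler sequence with the normal bundle surjection; global generation passes to the top exterior power. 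Each step checks out (surjections of locally free sheaves do induce surjections on $\wedge^r$, and $\det(\mathcal{F}\otimes L)\simeq\det\mathcal{F}\otimes L^{\otimes\operatorname{rk}\mathcal{F}}$). What your argument buys is the complete elimination of cohomological vanishing --- it is shorter, and would even work in positive characteristic, where Kodaira vanishing can fail; it also meshes nicely with the rest of the paper, which is built around the Euler sequence \eqref{euler}. What it costs is generality: it uses the full embedding $X\subset\mathbb{P}(W)$ and the specific shape of $K_{\mathbb{P}(W)}$, whereas the paper's inductive argument is the one that extends to the genuine Fujita-type statement $K_X\otimes L^{\otimes(d+1)}$ for an arbitrary very ample $L$ not tied to a particular projective embedding's normal bundle.
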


\begin{proof}
We argue by induction on $d$. For any point $x$ in $X$, let $H$ be a smooth hyperplane section of $X$ passing through $x$, and $j \colon H \rightarrow X$ the injection. Then we have
\[
0 \rightarrow K_X(d) \rightarrow K_X(d+1) \rightarrow j_* K_H (d) \rightarrow 0.
\]
If $d \geq 1$, $H^1(X,K_X(d))=0$ thanks to Kodaira--Nakano vanishing theorem, so the map $H^0(X, K_X(d+1)) \rightarrow H^0(H, K_H(d))$ is onto. Since $K_H(d)$ is globally generated by induction, there exists a section of $K_H(d)$ which is nonzero at $x$, and the result follows.
\end{proof}

\begin{definition}
We define the \emph{canonical complex} $(\mathscr{C}(X), \delta)$ of $X$ by
\[
\mathscr{C}_{-k}(X) =\Omega^{k}_{|X}(k)\otimes_\mathbb C H^0\big(X,K_X(k)\big)^*
\]
and the differential $\delta_{-k}$ is given by the composition
\[
\xymatrix{
\Omega^{k}_{|X}(k)\otimes_\mathbb C H^0\big(X, K_X(k)\big)^* \ar[d] \\
\Omega^{k-1}_{|X}(k-1)\otimes_{\mathcal{O}_X} \Omega^{1}_{|X}(1) \otimes_\mathbb C H^0\big(X,K_X(k)\big)^* \ar[d] \\ 
\Omega^{k-1}_{|X}(k-1)\otimes_{\mathbb{C}} W^* \otimes_\mathbb C H^0\big(X,K_X(k)\big)^* \ar[d] \\
\Omega^{k-1}_{|X}(k-1)\otimes_{\mathbb{C}} H^0(X, \mathcal{O}(1))\otimes_\mathbb C H^0\big(X,K_X(k)\big)^* \ar[d] \\
\Omega^{k-1}_{|X}(k-1)\otimes_\mathbb C H^0\big(X, K_X(k-1)\big)^*.
}
\]
\end{definition}
Since $K_X(N)$ is globally generated, we can see $(K_X(N))^*$ as a subbundle of $H^0(X, K_X(N))^* \otimes_{ \mathbb{C}} \mathcal{O}_X$. Hence 
\[
\wedge^d N^*_{X/\mathbb{P}(W)} \simeq \Omega^N_{|X} \otimes  K_X^* \simeq \Omega^N_{|X}(N) \otimes (K_X(N))^*
\]
is naturally a subbundle of $\mathscr{C}_{-N}(X)$.

\begin{lemma} \label{knife}
The line bundle $\wedge^d N^*_{X/\mathbb{P}(W)}$ lies in $\ker \delta_{-N}$.
\end{lemma}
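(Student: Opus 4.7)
The plan is to verify the stronger statement that $\delta_{-N}$ vanishes on the full subbundle $\Omega^N_{|X}(N)\otimes(K_X(N))^*\subset \mathscr C_{-N}(X)$, which contains the line bundle $\wedge^d N^*_{X/\mathbb P(W)}$ displayed in the statement. Since the initial coproduct step $\Omega^N_{|X}(N)\to\Omega^{N-1}_{|X}(N-1)\otimes\Omega^1_{|X}(1)$ in the definition of $\delta_{-N}$ acts only on the $\Omega^\bullet(\bullet)$-factor, it will suffice to show that the composition
\[
\Phi\colon \Omega^1_{|X}(1)\otimes (K_X(N))^* \to H^0\!\left(X,K_X(N-1)\right)^*\otimes\mathcal O_X
\]
---obtained by the Euler inclusion, then the restriction $W^*=H^0(\mathbb P(W),\mathcal O(1))\to H^0(X,\mathcal O(1))$, then the $\mathbb C$-linear contraction dual to multiplication $H^0(X,\mathcal O(1))\otimes H^0(X,K_X(N-1))\to H^0(X,K_X(N))$---is identically zero.

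I would check this fiberwise at an arbitrary $x=[w_0]\in X$, driven by two canonical identifications. First, the Euler sequence \eqref{euler} identifies $\Omega^1(1)|_x$ with $\mathrm{ann}(\mathbb C w_0)\subset W^*$, i.e.\ with those linear forms on $W$ which, seen as global sections of $\mathcal O(1)$, vanish at $x$. Second, by global generation of $K_X(N)$, the inclusion $(K_X(N))^*|_x\hookrightarrow H^0(X,K_X(N))^*$ is the evaluation functional $\widetilde\mu\colon\sigma\mapsto\mu(\sigma(x))$. Combining these, $\Phi(\eta\otimes\mu)$ becomes the linear form
\[
\sigma'\;\longmapsto\;\widetilde\mu\bigl(\eta_{|X}\cdot\sigma'\bigr)=\mu\bigl(\eta(x)\cdot\sigma'(x)\bigr),
\]
which vanishes because $\eta(x)=0$ in $\mathcal O(1)|_x$.

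The main technical obstacle will be bookkeeping: one must match up the inclusion $\wedge^d N^*_{X/\mathbb P(W)}\simeq \Omega^N_{|X}\otimes K_X^{-1}\hookrightarrow \Omega^N_{|X}(N)\otimes(K_X(N))^*$---coming from the determinant of the conormal exact sequence $0\to N^*_{X/\mathbb P(W)}\to \Omega^1_{\mathbb P(W)|X}\to\Omega^1_X\to 0$---with the two fiberwise descriptions above, and in particular to check that the coproduct really does preserve the factorization through $\Omega^1(1)_{|X}\otimes(K_X(N))^*$. Once those identifications are written down explicitly, the vanishing of $\Phi$, and hence of $\delta_{-N}$ on $\wedge^d N^*_{X/\mathbb P(W)}$, is tautological: the contraction-at-$x$ of a section vanishing at $x$ is zero.
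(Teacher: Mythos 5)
Your proposal is correct and is essentially the paper's own argument: the paper proves the same stronger statement (vanishing on all of $\Omega^N_{|X}(N)\otimes_{\mathcal O_X}K_X(N)^*$) via a commutative diagram whose left column contains a composite marked $0$, and that zero composite is exactly your map $\Phi$, vanishing because the Euler sequence realizes $\Omega^1(1)$ as the kernel of the evaluation $W^*\otimes\mathcal O\to\mathcal O(1)$. Your fiberwise computation at $x=[w_0]$ is just a pointwise unpacking of that same exactness, so the two proofs coincide in substance.
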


\begin{proof}
This follows from the diagram \footnotesize
\[\!\!\!\!\!\!\!\!\!\!
\begin{tikzcd}
	\Omega^{k}_{|X}(N)\underset{\mathcal{O}_X}{\otimes}  K_X(N)^* \arrow[r] \arrow[d] & \Omega^{k}_{|X}(N)\underset{\mathbb C}{\otimes}  H^0\left(X,K_X(N)\right)^* \arrow[d] \\
	\Omega^{k}_{|X}(N)\underset{\mathcal O_X}{\otimes}\Omega^{1}_{|X}(1)\underset{\mathcal{O}_X}{\otimes}  K_X(N)^* \arrow[r] \arrow[d]\arrow[ddd, bend right=85,"0"']  & \Omega^{k}_{|X}(N)\underset{\mathcal O_X}{\otimes}\Omega^{1}_{|X}(1)\underset{\mathbb C}{\otimes}  H^0\left(X,K_X(N)\right)^* \arrow[d] \\
	\Omega^{k}_{|X}(N)\underset{\mathbb C}{\otimes}W^*\underset{\mathcal{O}_X}{\otimes}  K_X(N)^* \arrow[r] \arrow[d] & \Omega^{k}_{|X}(N)\underset{\mathbb C}{\otimes}W^*\underset{\mathbb C}{\otimes}  H^0\left(X,K_X(N)\right)^*  \arrow[d] \\
	\Omega^{k}_{|X}(N)\underset{\mathbb C}{\otimes}H^0(X, \mathcal{O}(1))\underset{\mathcal{O}_X}{\otimes}  K_X(N)^* \arrow[r] \arrow[d] & \Omega^{k}_{|X}(N)\underset{\mathbb C}{\otimes}H^0(X, \mathcal{O}(1))\underset{\mathbb C}{\otimes}  H^0\left(X,K_X(N)\right)^* \arrow[d] \\
	\Omega^{k}_{|X}(N-1)\underset{\mathcal{O}_X}{\otimes}  K_X(N-1)^* \arrow[r]           & \Omega^{k}_{|X}(N-1)\underset{\mathbb C}{\otimes}  H^0\left(X,K_X(N-1)\right)^*.
\end{tikzcd}
\]
\end{proof}

\subsection{The Beilinson spectral sequence} \label{proj}
Recall that for any coherent sheaf $\mathcal{F}$ on $\mathbb{P}(W)$, we have isomorphisms
\[
\begin{cases}
\mathcal{F} \simeq pr_{2*}(pr_{1}^* \mathcal{F} \otimes \mathcal{K}(\mathcal{E}, s)) \\
\mathcal{F} \simeq pr_{1*}(pr_{2}^* \mathcal{F} \otimes \mathcal{K}(\mathcal{E}, s)).
\end{cases}
\]
where $\mathcal{E}=\Omega^1(1) \boxtimes \mathcal{O}(-1)$ endowed with the cosection defined at the beginning of \S \ref{3.3}. These isomorphisms are the crucial tool to study coherent sheaves on $\mathbb{P}(W)$ ({see} \cite{Beilinson}). The first one is closely related to the Syzygy theorem, but we will be more interested in the second one. The corresponding Beilinson spectral sequence satisfies
\begin{equation} \label{ssb}
\begin{cases}
E_1^{-p, q} = \Omega^p(p) \otimes_{\mathbb{C}} H^q(\mathbb{P}(W), \mathcal{F}(-p)) \\
E_{\infty}^{-p, p} =  Gr^{-p} (\mathcal{F})
\end{cases}
\end{equation}
where we filter by columns, so that $\{0\} = F^1 \mathcal{F} \subseteq F^0 \mathcal{F} \subseteq F^{-1} \mathcal{F} \subseteq \ldots \subseteq F^{-N} \mathcal{F} =\mathcal{F}$. Assume now that $\mathcal{F}=j_{X/\mathbb{P}(W)*} \mathcal{O}_X$ where $X$ is a smooth subvariety of $\mathbb{P}(W)$ of dimension $d$, and write $j=j_{X/\mathbb{P}(W)}$ for simplicity. Then \eqref{ssb} becomes

\[
\begin{cases}
E_1^{-p, q} = \Omega^p(p) \otimes_{\mathbb{C}} H^q(X, \mathcal{O}_X(-p)) \\
E_{\infty}^{-p, p} =  Gr^{p} (j_*\mathcal{O}_X).
\end{cases}
\]
Thanks to Kodaira--Nakano vanishing  theorem, $E_1^{-p,q}=0$ if $p>0$ and $q \neq d$. Hence the only nonzero arrows of the spectral sequence appear in the picture below, where we recall that $c=N-d$ is the codimension of $X$ in $\mathbb{P}(W)$:

\[\begin{tikzpicture}
	\draw[->] (0.5,0) -- (-10,0);
	\draw (-10,0) node[below] {$-p$};
	\draw (-8,0) node[below] {$-N$};
	\draw (-3,0) node[below] {$-d$};
				\draw [->] (0,-0.5) -- (0,6);
	\draw (0,6) node[right] {$q$};
		\draw (0,3) node[right] {$d$};
		\draw[dashed] (-8,-0.05) -- (-8,3);
		\draw[dashed] (-3,-0.05) -- (-3,3);
		\draw[purple,-stealth] (-1,3) to node[below]{\small$d_2$}(-0.05,2.3);
				\draw[purple,-stealth] (-3,3) to node[right]{\small$d_{d}$}(-0.05,0.55);
		\draw[purple,-stealth] (-3.5,3) to node[below]{\small$d_{d+1}$~~~}(-0.05,0.05);
		\draw (-.85,2.15) node[purple] {\reflectbox{$\ddots$}} ;
\draw[red,very thick] (0,0)--(0,3) -- (-8,3);
\draw[red,very thick] (0,0)--(0,3) -- (-8,3);
\draw[blue,very thick] (0,0)--(-5,5) node[above]
{\small$d_\mathrm{tot}=0$};

\end{tikzpicture}
\]
It follows that the only possibly nonzero $\mathrm{E}_{\infty}$ components are $\mathrm{E}_{\infty}^{-d,d}$ and $\mathrm{E}_{\infty}^{0,0}$, which means that the filtration on $j_* \mathcal{O}_X$ is of the type
\[
F^0 (j_* \mathcal{O}_X) = \ldots = F^{-(d-1)} (j_* \mathcal{O}_X) \subseteq F^{-d} (j_* \mathcal{O}_X) = \ldots = F^{-N} (j_* \mathcal{O}_X) = j_* \mathcal{O}_X.
\]

\begin{lemma}
$E_{\infty}^{-d,d}=\{0\}$.
\end{lemma}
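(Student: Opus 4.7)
My plan is to exploit the functoriality of the Beilinson spectral sequence applied to the natural surjection $\mathcal O_{\mathbb P(W)} \twoheadrightarrow j_*\mathcal O_X$, which is surjective because $X$ is connected. From the discussion preceding the lemma, the only potentially nonzero $E_{\infty}$ components are $E_\infty^{0,0}$ and $E_\infty^{-d,d}$, giving the filtration $F^0 = \dots = F^{-(d-1)} \subseteq F^{-d} = \dots = F^{-N} = j_*\mathcal O_X$ and the identification $E_{\infty}^{-d, d} = j_*\mathcal O_X / F^0$. So it is enough to prove $F^0 = j_*\mathcal O_X$.

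First I apply the Beilinson spectral sequence to the auxiliary sheaf $\mathcal F = \mathcal O_{\mathbb P(W)}$. Using the standard computation of $H^q(\mathbb P^N, \mathcal O(k))$ (Bott's formula), one gets $H^q(\mathbb P(W), \mathcal O(-p)) = 0$ for $0 \leq p \leq N$ except at $(p, q) = (0, 0)$, where the value is $\mathbb C$. Hence the $E_1$ page for $\mathcal O_{\mathbb P(W)}$ is concentrated at the single term $E_1^{0,0} = \mathcal O_{\mathbb P(W)}$; the spectral sequence degenerates at $E_1$, and $E_\infty^{0,0}(\mathcal O_{\mathbb P(W)}) = \mathcal O_{\mathbb P(W)}$ with a constant filtration $F^{-p}(\mathcal O_{\mathbb P(W)}) = \mathcal O_{\mathbb P(W)}$ for all $p \geq 0$.

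Next, by naturality, the morphism $f \colon \mathcal O_{\mathbb P(W)} \to j_*\mathcal O_X$ induces a morphism of spectral sequences compatible with the filtered abutments. In particular, $f$ sends $F^0(\mathcal O_{\mathbb P(W)}) = \mathcal O_{\mathbb P(W)}$ into $F^0(j_*\mathcal O_X)$, so we obtain a factorization
\[
\mathcal O_{\mathbb P(W)} \longrightarrow F^0(j_*\mathcal O_X) \hookrightarrow j_*\mathcal O_X
\]
whose composite is $f$ itself. Since $f$ is surjective, the inclusion $F^0(j_*\mathcal O_X) \hookrightarrow j_*\mathcal O_X$ must also be surjective, hence an equality. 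This yields $E_\infty^{-d, d} = j_*\mathcal O_X / F^0(j_*\mathcal O_X) = 0$.

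The only mild obstacle is justifying that the filtered abutment identification is natural in $\mathcal F$; this is standard and follows from the construction of the Beilinson spectral sequence as the hypercohomology spectral sequence of the functorial complex $\mathrm{pr}_{1*}\bigl(\mathrm{pr}_2^* \mathcal F \otimes \mathcal K(\mathcal E, s)\bigr)$, whose terms depend linearly on $\mathcal F$ and whose filtration by columns is preserved by morphisms of sheaves.
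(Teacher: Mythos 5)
Your argument is correct and is essentially the paper's own proof: both establish that the Beilinson filtration on $\mathcal{O}_{\mathbb{P}(W)}$ is trivial (you via Bott's formula, the paper by specializing its general analysis to $X=\mathbb{P}(W)$, where $d=N$) and then use functoriality of the filtered abutment along $\mathcal{O}_{\mathbb{P}(W)}\twoheadrightarrow j_*\mathcal{O}_X$ to force $F^0(j_*\mathcal{O}_X)=j_*\mathcal{O}_X$. The only cosmetic remark is that the surjectivity of $\mathcal{O}_{\mathbb{P}(W)}\to j_*\mathcal{O}_X$ needs no connectedness hypothesis, since $j$ is a closed immersion.
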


\begin{proof}
We use the fonctoriality of the Beilinson spectral sequence. For $X=\mathbb P(W)$, $d=N$ so the filtration is trivial. Hence we have a commutative diagram
\[
\xymatrix{
\mathcal{O}_{\mathbb P(W)} \ar[r] & j_* \mathcal{O}_X \\
F^{0} \mathcal{O}_{\mathbb P(W)} \ar[r] \ar[u] &   \ar[u] F^{0} (j_* \mathcal{O}_X)
}
\]
Since the left vertical arrow is an isomorphism, the right one is surjective, so it is an isomorphism.
\end{proof}

\begin{lemma}\label{jaune}
$\ker d_1^{-(d+1),d}$ is locally free.
\end{lemma}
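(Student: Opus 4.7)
The plan is to exhibit $\ker d_1^{-(d+1),d}$ as the rightmost slot of a length-$(c-1)$ resolution of the ideal sheaf $\mathcal I_X$ whose other slots are manifestly locally free, then to deduce local freeness by an Auslander--Buchsbaum and Schanuel-type argument applied stalkwise.

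First I would compute the cohomology sheaves of the complex $K^\bullet=(E_1^{\bullet,d},d_1)$ on the $q=d$ row. Since the intermediate rows $0<q<d$ all vanish, the differentials $d_2,\ldots,d_d$ on this row are zero, so $E_2^{-p,d}=\cdots=E_{d+1}^{-p,d}=\mathcal H^{-p}(K^\bullet)$. The only nontrivial higher differential leaving or entering the row is $d_{d+1}:E_{d+1}^{-(d+1),d}\to E_{d+1}^{0,0}=\mathcal O_{\mathbb P(W)}$. Combining this observation with convergence to the sheaf $j_*\mathcal O_X$ in total degree zero, with $E_\infty^{0,0}=j_*\mathcal O_X$ (from the proof of the preceding lemma), and with $E_\infty^{-d,d}=0$ (the preceding lemma itself), I would conclude that $\mathcal H^{-p}(K^\bullet)=0$ for $p\ne d+1$, while $\mathcal H^{-(d+1)}(K^\bullet)\simeq\mathcal I_X$ via the injection into $\mathcal O_{\mathbb P(W)}$ induced by $d_{d+1}$.

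This translates directly into an exact sequence
\[
0\to K^{-N}\to K^{-(N-1)}\to\cdots\to K^{-(d+2)}\to\ker d_1^{-(d+1),d}\to\mathcal I_X\to 0
\]
of length $c-1$ on $\mathbb P(W)$, every term of which is locally free except possibly $\ker d_1^{-(d+1),d}$. To conclude I would work stalkwise: at any $x\in\mathbb P(W)$, Auslander--Buchsbaum at the regular local ring $\mathcal O_{\mathbb P(W),x}$, together with smoothness of $X$ of codimension $c$, gives $\operatorname{pd}((\mathcal I_X)_x)\le c-1$. The stalk of the sequence above at $x$ is then a length-$(c-1)$ resolution of $(\mathcal I_X)_x$ in which every slot is free except possibly the rightmost; comparing with any honest free resolution of $(\mathcal I_X)_x$ of the same length via the generalized Schanuel lemma forces $(\ker d_1^{-(d+1),d})_x$ to be stably free, hence free.

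The main technical point, in my view, is the identification $\mathcal H^{-(d+1)}(K^\bullet)\simeq\mathcal I_X$: it requires verifying that the higher differential $d_{d+1}$ recovers the canonical surjection $\mathcal O_{\mathbb P(W)}\twoheadrightarrow j_*\mathcal O_X$ at the $E_{d+1}^{0,0}$ slot. Once this identification is secured, the remaining homological algebra is routine.
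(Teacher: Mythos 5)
Your argument has two problems; the second one is fatal.

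First, the premise that ``the intermediate rows $0<q<d$ all vanish'' is false. Kodaira--Nakano kills $E_1^{-p,q}=\Omega^p(p)\otimes_{\mathbb C} H^q(X,\mathcal O_X(-p))$ only for $p>0$, so the column $p=0$ carries $E_1^{0,q}=\mathcal O\otimes_{\mathbb C}H^q(X,\mathcal O_X)$, which is nonzero for intermediate $q$ in general (take $X$ a curve of positive genus, or an abelian variety). Consequently the differentials $d_2,\dots,d_d$ on the $q=d$ row are not zero --- convergence forces them to be isomorphisms $E_p^{-p,d}\xrightarrow{\ \sim\ }E_p^{0,d-p+1}$ --- and $\mathcal H^{-p}(K^\bullet)=E_2^{-p,d}\simeq\mathcal O\otimes_{\mathbb C} H^{d-p+1}(X,\mathcal O_X)$ is generally nonzero for $2\le p\le d$. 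Your displayed exact sequence happens to survive, because it only uses exactness in degrees $\le-(d+1)$, where the vanishing and the identification $\mathcal H^{-(d+1)}(K^\bullet)\simeq\mathcal I_X$ are indeed correct (this is essentially the content of \Cref{resolution}); but the justification you give for it is wrong.

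Second, and fatally: Auslander--Buchsbaum and Schanuel control the \emph{leftmost} term of a resolution (the top syzygy), not the rightmost one. From an exact sequence $0\to F_{c-1}\to\cdots\to F_1\to Q\to M\to 0$ with the $F_i$ free and $\mathrm{pd}(M)\le c-1$ one cannot conclude that $Q$ is free: over any regular local ring $R$ take $M=R/(f)$ with $f$ a nonzerodivisor, so $\mathrm{pd}(M)=1$ and $c-1=1$, and consider $0\to R\to R\oplus R/(f)\to R/(f)\to 0$; the left term is free, yet the middle term is not. The generalized Schanuel lemma compares the two left-hand ends of resolutions all of whose other terms are projective, whereas your unknown sits in position $0$; the long exact sequence of $\mathrm{Ext}$ only yields $\mathrm{pd}(Q_x)\le c-1$. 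The paper avoids this by running the d\'evissage from the \emph{other} end of the row: since $E_\infty^{-p,d}=0$ for $p\ne d$ and $E_\infty^{-d,d}=0$, the map $d_1^{-1,d}$ is a surjection of locally free sheaves, $\ker d_1^{-1,d}=\mathrm{Im}\,d_1^{-2,d}$ is locally free, and each $E_2^{-p,d}$ ($2\le p\le d$) is identified by $d_p$ with the locally free sheaf $E_1^{0,d-p+1}$; hence every $\ker d_1^{-p,d}$ and $\mathrm{Im}\,d_1^{-p,d}$ is successively exhibited as the kernel of a surjection between locally free sheaves, all the way down to $\ker d_1^{-(d+1),d}$. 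You would need an argument of this shape (or some other reason that $\mathrm{Im}\,d_1^{-(d+1),d}$ is locally free) to close the gap.
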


\begin{proof}
We have $E_{\infty}^{-d,d}=\{0\}$. Hence we have a bunch of isomorphisms and exact sequences, where locally free sheaves are highlighted in yellow:
\begin{align*}
& 0 \rightarrow \ker d_1^{-1,d} \rightarrow \highlight{E_1^{-1,d}} \xrightarrow{d_1^{-1, d}} \highlight{E_1^{0,d}} \rightarrow 0 \\
& \ker d_1^{-1,d} \simeq \mathrm{Im}\, d_1^{-2, d} \\
& 0 \rightarrow \ker d_1^{-2,d} \rightarrow \highlight{E_1^{-2,d}} \xrightarrow{d_1^{-2, d}} \mathrm{Im} \,d_1^{-2,d} \rightarrow 0 \\
& 0 \rightarrow \mathrm{Im} \, d_1^{-3,d} \rightarrow \ker d_1^{-2,d} \rightarrow E_2^{-2,d}\overset{d_2}{\simeq} E_2^{0,d-1}\simeq \highlight{E_1^{0,d-1}} \rightarrow 0 \\
& 0 \rightarrow \ker d_1^{-3,d} \rightarrow \highlight{E_1^{-3,d}} \xrightarrow{d_1^{-3, d}} \mathrm{Im} \, d_1^{-3,d} \rightarrow 0 \\
& 0 \rightarrow \mathrm{Im} \,d_1^{-4,d} \rightarrow \ker d_1^{-3,d} \rightarrow E_2^{-3,d}\simeq E_3^{-3,d}\overset{d_3}{\simeq} E_3^{0,d-2}\simeq \highlight{E_1^{0,d-2}} \rightarrow 0 \\
& \vdots \\
& 0 \rightarrow \ker d_d^{-d, d} \rightarrow \highlight{E_1^{-d,d}} \xrightarrow{d_1^{-d,d}} \mathrm{Im}\, d_1^{-d,d} \rightarrow 0 \\
&\ker d_d^{-d, d} \simeq   \mathrm{Im} \, d_1^{-(d+1),d}  \rightarrow 0 \\
&0 \rightarrow \ker d_1^{-(d+1),d} \rightarrow \highlight{E_1^{-{d+1},d}} \xrightarrow{d_1^{-(d+1), d}} \mathrm{Im} \, d_1^{-(d+1),d} \rightarrow 0. \quad \numberthis \label{alpha}
\end{align*}
The result follows.
%
\end{proof}

Let us consider the complex $\tau^{<-d} E_1^{\bullet, q}$, that we realize as
\[
E_1^{-N, d} \rightarrow E_1^{-(N-1), d} \rightarrow \ldots \rightarrow E_1^{-(d+2), d} \rightarrow \ker d_1^{-(d+1), d}.
\]
We can form the composition
\[
\varphi \colon \ker d_1^{-(d+1), d} \rightarrow E_2^{-(d+1), d} \simeq E_{d+1}^{-(d+1), d} \xhookrightarrow{d_{d+1}^{-(d+1), d}} E_{d+1}^{0,0} \simeq E_1^{0,0}=\mathcal{O}
\]
where we put $\mathcal{O}$ in degree $0$.
\begin{proposition} \label{resolution}
The complex 
\[
E_1^{-N, d} \rightarrow E_1^{-(N-1), d} \rightarrow \ldots \rightarrow E_1^{-(d+2), d} \rightarrow \ker d_1^{-(d+1), d } \xrightarrow{\varphi} \mathcal{O}
\]
is a locally free resolution of $j_*\mathcal{O}_X$. 
\end{proposition}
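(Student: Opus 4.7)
The plan is to compute the cohomology of the given complex (call it $C^\bullet$) in each degree, and to identify it via the $E_\infty$ page of the Beilinson spectral sequence. First I would record that, by Kodaira--Nakano, $E_1$ is supported on the ``L-shape'' made of the row $q=d$ for $1\leq p\leq N$ together with the column $p=0$ for $0\leq q\leq d$, and that the spectral sequence converges to $j_*\mathcal{O}_X$ placed in total degree zero, so $E_\infty^{-p,q}=0$ unless $p=q$. Since the differentials of $C^\bullet$ in degrees $\leq -2$ are by construction the Koszul differentials $d_1^{-p,d}$ along the $q=d$ row, one immediately reads off $H^{-m}(C^\bullet)=E_2^{-(d+m),d}$ for $2\leq m\leq c$, $H^{-1}(C^\bullet)=\ker\varphi/\operatorname{im}d_1^{-(d+2),d}$, and $H^0(C^\bullet)=\mathcal{O}/\operatorname{im}\varphi$.

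Next, to kill the cohomology in the ``middle'' degrees, I would observe that for $k\geq d+2$ no higher differential $d_r$ (with $r\geq 2$) touches $E_r^{-k,d}$: an outgoing $d_r$ lands at $(-k+r,d-r+1)$, which is off the L-shape unless $r=k$, and in that case the target $(0,d-k+1)$ has $q<0$; an incoming $d_r$ has source on row $q=d+r-1\geq d+1$, again off the L-shape. Hence $E_2^{-k,d}=E_\infty^{-k,d}=0$. An entirely similar analysis at $k=d+1$ shows $E_2^{-(d+1),d}\simeq E_{d+1}^{-(d+1),d}$, so $\varphi$ is identified with the transgression $d_{d+1}^{-(d+1),d}\colon E_{d+1}^{-(d+1),d}\to E_{d+1}^{0,0}\simeq\mathcal{O}$. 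Injectivity of $\varphi$ is then equivalent to $E_{d+2}^{-(d+1),d}=E_\infty^{-(d+1),d}=0$, which again follows since $(-(d+1),d)$ is off the antidiagonal. This gives $H^{-m}(C^\bullet)=0$ for all $1\leq m\leq c$.

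For $H^0(C^\bullet)$, I would verify that $E_2^{0,0}=\mathcal{O}$ (since $E_1^{-1,0}$ vanishes by Kodaira--Nakano, and the outgoing $d_1$ from $(0,0)$ is out of range), and that the only higher incoming differential at $(0,0)$ is precisely $d_{d+1}^{-(d+1),d}$, because any other $d_r$ would have source $(-r,r-1)$ off the L-shape. Hence $E_{d+2}^{0,0}=\mathcal{O}/\operatorname{im}\varphi$, which coincides with $E_\infty^{0,0}$. Combining this with the preceding lemma $E_\infty^{-d,d}=0$, the filtration on $j_*\mathcal{O}_X$ collapses onto the single graded piece $E_\infty^{0,0}$, giving $H^0(C^\bullet)=j_*\mathcal{O}_X$. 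Local freeness of $C^\bullet$ is then immediate: the terms $E_1^{-p,d}=\Omega^p(p)\otimes_{\mathbb{C}}H^d(X,\mathcal{O}_X(-p))$ are locally free, $\ker d_1^{-(d+1),d}$ is locally free by Lemma \ref{jaune}, and $\mathcal{O}$ is trivially so. The main subtlety I anticipate is the bookkeeping of higher differentials around the two borderline positions $(-(d+1),d)$ and $(0,0)$, to make sure $\varphi$ really is a transgression and that nothing else disturbs the identification $E_{d+2}=E_\infty$ at those spots.
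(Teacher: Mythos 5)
Your proof is correct and follows essentially the same route as the paper's: identify the cohomology of the complex with $E_2$-terms along the $q=d$ row, use the vanishing of $E_\infty$ off the antidiagonal (together with the lemma $E_\infty^{-d,d}=0$) to kill everything except $E_\infty^{0,0}=j_*\mathcal{O}_X$, interpret $\varphi$ as the transgression $d_{d+1}^{-(d+1),d}$ to handle degrees $-1$ and $0$, and invoke Lemma \ref{jaune} for local freeness. The only difference is that you spell out the bookkeeping of higher differentials that the paper leaves implicit in its picture of the spectral sequence.
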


\begin{proof}
First note that it is indeed a complex: since $\varphi$ factors through $E_2^{-(d+1), d}$, it vanishes on the image of $d_1^{-(d+2), d}$. The complex $E_1^{\bullet, q}$ has no cohomology in degrees $\llbracket -N, -(d+2) \rrbracket$. Since $\ker \varphi = \mathrm{im}\, d_1^{-(d+2), d}$, there is no cohomology neither in degree $-1$. Hence we see that the complex under investigation is a  resolution of $E_{\infty}^{0,0}$, which is $j_*(\mathcal{O}_X)$. Lastly, we proved in the previous \Cref{jaune} that $\ker d_1^{-(d+1),d}$ is locally free, thus the resolution is locally free.
\end{proof}

\subsection{The canonical complex (II)} \label{vegas}
In this section, we prove the following explicit result computing the derived self-intersection of $X$ in $\mathbb{P}(W)$.

\begin{theorem} \label{AS} We have
$\mathcal{O}_X \overset{\mathbb{L}}{\otimes}_{\mathbb P(W)} \mathcal{O}_X \simeq (\tau^{<d} \mathscr{C}(X))[-d] \oplus \mathcal{O}_X$. 
\end{theorem}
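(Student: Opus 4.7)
The plan is to compute $\mathcal O_X \overset{\mathbb L}{\otimes}_{\mathbb P(W)} \mathcal O_X$ by pulling back to $X$ the locally free resolution of $j_*\mathcal O_X$ constructed in \Cref{resolution}, observing that the terminal differential vanishes upon restriction, and then recognizing the resulting direct summand as $(\tau^{<d}\mathscr C(X))[-d]$.

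I would start from the resolution
\[
R^\bullet := \bigl[E_1^{-N,d} \to \cdots \to E_1^{-(d+2),d} \to \ker d_1^{-(d+1),d} \xrightarrow{\varphi} \mathcal O_{\mathbb P(W)}\bigr]
\]
placed in cohomological degrees $-c,\ldots,-1,0$, whose restriction $R^\bullet|_X$ represents $\mathcal O_X \overset{\mathbb L}{\otimes}_{\mathbb P(W)} \mathcal O_X$ in $\mathrm D(X)$ since $R^\bullet$ is made of locally free sheaves. Serre duality on $X$ identifies $E_1^{-p,d}|_X = \Omega^p_{|X}(p) \otimes_{\mathbb C} H^d(X, \mathcal O_X(-p))$ with $\mathscr C_{-p}(X) = \Omega^p_{|X}(p) \otimes_{\mathbb C} H^0(X, K_X(p))^*$. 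The Beilinson differential $d_1^{-p,d}$ arises from the Koszul differential of $\mathcal K(\mathcal E, s)$ on $\mathbb P(W)^2$ (built from the Euler sequence), so its restriction to $X$ is expressed by the very Euler-sequence composition defining $\delta_{-p}$ in \S\ref{can1}; a direct comparison gives $d_1^{-p,d}|_X = \delta_{-p}$.

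The crucial observation is that $\varphi$ factors as $\ker d_1^{-(d+1),d} \twoheadrightarrow \mathcal I_X \hookrightarrow \mathcal O_{\mathbb P(W)}$, since $\operatorname{coker}\varphi = j_*\mathcal O_X$. As $\mathcal I_X \otimes_{\mathcal O_{\mathbb P(W)}} \mathcal O_X \to \mathcal O_X$ is the zero map, one concludes $\varphi|_X = 0$, and consequently $R^\bullet|_X$ splits as an honest direct sum of complexes
\[
R^\bullet|_X \,\simeq\, \bigl[\mathscr C_{-N}(X) \to \cdots \to \mathscr C_{-(d+2)}(X) \to (\ker d_1^{-(d+1),d})|_X\bigr] \,\oplus\, \mathcal O_X,
\]
the first factor in degrees $-c,\ldots,-1$. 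Using \Cref{jaune}, which ensures that $\ker d_1^{-(d+1),d}$ is locally free and so its restriction commutes with taking the kernel, together with the identification of differentials above, we obtain $(\ker d_1^{-(d+1),d})|_X = \ker\delta_{-(d+1)}$. Thus the first factor is precisely the good truncation $\tau^{<d}\mathscr C(X)$ shifted by $[-d]$.

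The main obstacle is the compatibility check $d_1^{-p,d}|_X = \delta_{-p}$: one has to unwind the definition of $d_1^{-p,d}$ through the Beilinson identification $\mathcal F \simeq pr_{1*}(pr_2^*\mathcal F \otimes \mathcal K(\mathcal E, s))$ applied to $\mathcal F = j_*\mathcal O_X$, invoke Serre duality, and match the resulting expression term-by-term with the explicit Euler-sequence composition defining $\delta_{-p}$ in \S\ref{can1}. Once this identification is in place, the splitting argument above produces the claimed decomposition.
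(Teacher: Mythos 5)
Your proposal is essentially the paper's first proof of Theorem \ref{AS}: restrict the locally free resolution of Proposition \ref{resolution} to $X$, observe that $\varphi_{|X}=0$ because $\varphi$ factors through $\mathcal{I}_X$, split off $\mathcal{O}_X$, and identify the remaining summand with the truncated canonical complex. The only point to tighten is the final step: local freeness of $\ker d_1^{-(d+1),d}$ by itself does not make restriction to $X$ commute with taking kernels; as in the paper, one should instead restrict the exact sequence \eqref{alpha} (which controls the relevant $\mathcal{T}or$ of the image sheaf) to conclude that $\big(\ker d_1^{-(d+1),d}\big)_{|X}\simeq \ker\big((d_1^{-(d+1),d})_{|X}\big)$.
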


We provide two different proofs of this result, one relying 
on \Cref{brique}, and the other hinging on our study of the Beilinson spectral sequence in the previous section. Both proofs are very close since they rely on the Koszul complex on $\mathbb{P}(W) \times \mathbb{P}(W)$ defining the diagonal.

\begin{proof}[Proof 1]
Thanks to \Cref{resolution}, we can solve $j_* \mathcal{O}_X$ by the complex of locally free sheaves
\[
E_1^{-N, d} \rightarrow E_1^{-(N-1), d} \rightarrow \ldots \rightarrow E_1^{-(d+2), d} \rightarrow \ker d_1^{-(d+1), d } \xrightarrow{\varphi} \mathcal{O}
\]
on $\mathbb{P}(W)$. Hence the derived intersection we are looking for is equal to the restriction of this complex to $X$. Since $\mathrm{Im} \varphi$ is the ideal sheaf of $X$, $\varphi_{|X}=0$, which gives the complex
\[
{E_1^{-N, d}}_{|X} \rightarrow {E_1^{-(N-1), d}}_{|X} \rightarrow \ldots \rightarrow {E_1^{-(d+2), d}}_{|X} \rightarrow ({\ker d_1^{-(d+1), d }})_{|X} \xrightarrow{0} \mathcal{O}_X.
\]
It remains to prove that the natural morphism
\[
({\ker d_1^{-(d+1), d }})_{|X} \rightarrow \ker {d_1^{-(d+1), d}}_{|X}
\]
is an isomorphism. This follows from restristing the exact sequence \eqref{alpha} to $X$.
\end{proof}

\begin{proof}[Proof 2] Using \Cref{brique}, we see that 
\[
\mathcal{O}_X \overset{\mathbb{L}}{\otimes}_{\mathbb P(W)} \mathcal{O}_X \simeq \mathrm{pr}_{1*} \mathcal K(\mathcal E|_{X^2},s|_{X^2})
\] 
in $\mathrm{D}(\mathbb{P}(W))$.
We have a Beilinson-type spectral sequence such that
\begin{align*}
E_1^{-p,q}&=R^q\mathrm{pr}_{1*}(\Omega^p_{|X}(p)\boxtimes\mathcal O(-p))\\
&=\Omega^p_{|X}(p)\otimes_\mathbb C H^q(X,\mathcal O(-p)).
\end{align*}
By Kodaira--Nakano vanishing theorem, the term $H^q(X,\mathcal O(-p))$ is nonzero only if $p=0$ or $q=d$. To visualize the situation, we take a double complex giving an injective resolution of $\mathcal{K}(\mathcal E|_{X^2},s|_{X^2})$, and apply $pr_{1*}$. This gives a double complex $K$, which is quasi-isomorphic to $\mathcal{O}_X {\otimes}^{\mathbb{L}}_{\mathbb P(W)} \mathcal{O}_X$. Since
\[
\mathcal{O}_X \overset{\mathbb{L}}{\otimes}_{\mathbb P(W)} \mathcal{O}_X \simeq \tau^{<0} (\mathcal{O}_X \overset{\mathbb{L}}{\otimes}_{\mathbb P(W)} \mathcal{O}_X) \oplus j_*\mathcal{O}_X, 
\]
the complex $K$ satisfies the following properties:
\begin{enumerate}
\item[--] $\mathrm{Tot}(K)$ is concentrated in degree $\llbracket-c,0\rrbracket$;
\item[--] $K\simeq \mathcal O_X\oplus \tau_\mathrm{tot}^{<0}(K)$.
\end{enumerate}
This can be depicted as follows : the complex is cohomologically concentrated in the blue zone, and the $E_1$ page of the spectral sequence
lies on the red line.
\[\begin{tikzpicture}
	\draw[->] (0.5,0) -- (-10,0);
	\draw (-10,0) node[below] {$-p$};
	\draw (-8,0) node[below] {$-N$};
	\draw (-5,0) node[below] {$-c$};
	\draw (-3,0) node[below] {$-d$};
				\draw [->] (0,-0.5) -- (0,6);
	\draw (0,6) node[right] {$q$};
		\draw (0,3) node[right] {$d$};
		\draw[dashed] (-8,-0.05) -- (-8,3);
		\draw[dashed] (-5,-0.05) -- (-5,3);
		\draw[dashed] (-3,-0.05) -- (-3,3);
		\draw[purple,-stealth] (-1,3) to node[below]{\small$d_2$}(-0.05,2.3);
				\draw[purple,-stealth] (-3,3) to node[right]{\small$d_{d}$}(-0.05,0.55);
		\draw[purple,-stealth] (-3.5,3) to node[below]{\small$d_{d+1}$~~~}(-0.05,0.05);
		\draw (-.85,2.15) node[purple] {\reflectbox{$\ddots$}} ;
								\draw[red,very thick] (0,0)--(0,3) -- (-8,3);
			\draw(0,0)--(-5,5) node[above]{\small$d_\mathrm{tot}=0$};
			\draw(-5,0)--(-10,5) node[above]{\small$d_\mathrm{tot}=-c$};
				\clip (0.5,-0.5) -- (-4.5,-0.5) -- (-10,5) -- (-5,5);
	\fill[cyan,opacity=.15] (0.5,0) rectangle (-10,5);
							\draw[red,very thick] (0,0)--(0,3) -- (-8,3);
\end{tikzpicture}\]
For any $q$ we have a triangle associated to the filtration by the rows\[
	\tau_\mathrm{row}^{>q}K\to	\tau_\mathrm{row}^{\ge q}K\to E_1^{\bullet,q}\overset{+1}{\to}
	\]
	which yields\[
	\tau_\mathrm{tot}^{<0}\tau_\mathrm{row}^{>q}K\to	\tau_\mathrm{tot}^{<0}\tau_\mathrm{row}^{\ge q}K\to \tau_\mathrm{tot}^{<0}E_1^{\bullet,q}\overset{+1}{\to}
	\]
where the cofiber is $0$ whenever $q\neq d$. The result follows by induction on $q$, since the complex $E_1^{\bullet, q}$ is exactly the complex $\mathscr{C}(X)$.
\end{proof}

%

\end{document}